\newtheorem{theorem}{Theorem}[section]
\newtheorem{lemma}[theorem]{Lemma}
\newtheorem{proposition}[theorem]{Proposition}
\newtheorem{question}[theorem]{Question}
\newenvironment{manualtheorem}[1]{%
  \manualtheoreminner
}{\endmanualtheoreminner}
\theoremstyle{definition}
\newtheorem{definition}[theorem]{Definition}
\theoremstyle{remark}
\newtheorem{remark}[theorem]{Remark}
\numberwithin{equation}{section}
\begin{document}
	
\title[Quadratic isoperimetric inequality]{quadratic isoperimetric inequality for mapping tori of hyperbolic groups}

\author{Qianwen Sun}
\address{Department of Mathematics,
        University of Michigan,
        Ann Arbor, MI 48109.
	 }
\email{qwsun@umich.edu}

\date{}
\maketitle

\textbf{Abstract}:  In this paper, we establish that the mapping torus of a one-ended torsion-free hyperbolic group exhibits a quadratic isoperimetric inequality. 

\section{Introduction}

Dehn functions have garnered significant attention in recent years. Coined after Max Dehn, a Dehn function is an optimal function that bounds the area of a word, representing the identity, in terms of the defining relators within a finitely presented group. It is closely tied to the algorithmic complexity of the word problem, that is determining whether a given word equals 1. A finitely presented group has a solvable word problem if and only if it possesses a recursive Dehn function \cite{gersten1993isoperimetric}. An important milestone connecting Dehn functions and hyperbolic groups, established by Gromov \cite{gromov1987hyperbolic}, states that a finitely presented group is hyperbolic if and only if its Dehn function is linear, or equivalently, satisfies a linear isoperimetric inequality.

Gromov also highlighted an isoperimetric gap in the same publication: if a finitely presented group satisfies a subquadratic isoperimetric inequality, then its Dehn function is linear. Notably, no group has Dehn function $n^d$, where $d\in (1,2)$.

Mapping tori have proven to be a powerful tool in low-dimensional topology. As any outer automorphism $[f]$ of a surface group is realized by a homeomorphism $f$ of the underlying surface $S$, the mapping torus represents the fundamental group of a 3-manifold. An influential result by Thurston \cite{thurston1982three} asserts that, in this scenario, the 3-manifold $M_f$ is hyperbolic if and only if $f$ is a pseudo-Anosov homeomorphism of $S$. In all other cases, $\pi_1(M_f)$ has a quadratic Dehn function because it is automatic \cite{epstein1992word}.

Building upon Sela's work, it was established that if a mapping torus of a torsion-free hyperbolic group $G$ is hyperbolic, then $G$ must be a free product of free groups and surface groups \cite{brinkmann2000hyperbolic}. Consequently, in the case of one-ended groups, G becomes a surface group, and the automorphism becomes a pseudo-Anosov automorphism. However, if the mapping torus is not hyperbolic, the nature of its Dehn function becomes a matter of inquiry, leading us to our primary problem.

\begin{question}
For an automorphism $\varphi$ of a hyperbolic group $G$, does the mapping torus $M_{\varphi}$ always have a quadratic isoperimetric inequality? 
\end{question}

Brinkmann \cite{brinkmann2000hyperbolic} classified all hyperbolic mapping tori of free groups. In 2000, Macura \cite{macura2000quadratic} demonstrated that the mapping torus of a polynomially growing automorphism of a finitely generated free group satisfies a quadratic isoperimetric inequality.
Bridson and Groves \cite{bridson2010quadratic} extended this result to arbitrary automorphisms in 2010. 

In this paper, we establish the following result.

\begin{theorem}\label{main}
Let $G$ be a one-ended torsion-free hyperbolic group and $\varphi$ be an automorphism of $G$. Then the corresponding mapping torus $M_{\varphi}$ satisfies a quadratic isoperimetric inequality.
\end{theorem}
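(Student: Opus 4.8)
The plan is to isolate inside $M_\varphi$ a hyperbolic skeleton coming from the exponentially growing part of $\varphi$ together with a ``polynomial core'' coming from the rest, to estimate areas in the core by an argument adapted to the canonical JSJ of $G$, and to glue via relative hyperbolicity. First, the reductions. If $M_\varphi$ is hyperbolic there is nothing to prove, since a linear isoperimetric inequality is in particular quadratic; by Brinkmann's theorem this disposes of the case in which $G$ is a closed surface group with pseudo-Anosov monodromy. As $M_{\varphi^k}$ embeds in $M_\varphi$ with index $k$ and Dehn functions are invariant under passing to finite-index subgroups, I may replace $\varphi$ by a positive power; since $\mathrm{Out}(G)$ acts on the canonical cyclic JSJ decomposition of $G$, after one such replacement $\varphi$ fixes its finite underlying graph pointwise and preserves every vertex and edge group up to conjugacy. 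This induces a graph-of-groups decomposition of $M_\varphi$ over the same graph, with vertex groups $M_v = G_v\rtimes_\varphi\mathbb{Z}$ and edge groups $M_e = G_e\rtimes_\varphi\mathbb{Z}$; since the $G_e$ are infinite cyclic, each $M_e$ is $\mathbb{Z}^2$ or a Klein bottle group.

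Next, the anatomy of the vertices. A maximal hanging Fuchsian vertex $G_v=\pi_1(\Sigma)$, necessarily with $\chi(\Sigma)<0$ since $G$ is one-ended and non-elementary, on which $\varphi$ acts as a pseudo-Anosov class gives a hyperbolic $M_v$ by Thurston. Every other vertex group is ``non-atoroidal'': if $G_v$ is rigid then its outer automorphism group relative to the incident edge groups is finite, so after a further power $\varphi|_{G_v}$ is inner and $M_v\cong G_v\times\mathbb{Z}$, which has quadratic Dehn function because $G_v$ is hyperbolic; if $G_v$ is Fuchsian with reducible monodromy then, after a further power, $M_v$ is the fundamental group of a compact $3$-manifold fibering over the circle whose geometric decomposition involves only hyperbolic and $\mathbb{H}^2\times\mathbb{R}$- or $\widetilde{\mathrm{SL}}_2$-Seifert pieces — no Nil or Sol piece, as the fiber has negative Euler characteristic — hence is biautomatic with quadratic Dehn function. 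Let $G_0\le G$ be the fundamental group of the subgraph of groups spanned by the non-atoroidal vertices: it is quasiconvex in $G$, hence hyperbolic, it is $\varphi$-invariant, and $\varphi|_{G_0}$ is polynomially growing, the only potential source of exponential growth — pseudo-Anosov behavior on Fuchsian pieces — having been removed.

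The conclusion then rests on two statements. (i) $M_\varphi$ is hyperbolic relative to the ``polynomial core'' $M_{\varphi|_{G_0}} = G_0\rtimes_\varphi\mathbb{Z}$ together with the $\mathbb{Z}^2$-cusp subgroups of the pseudo-Anosov Fuchsian pieces, suitably merged into an almost malnormal family. (ii) $M_{\varphi|_{G_0}}$ satisfies a quadratic isoperimetric inequality. Given (i) and (ii), the theorem follows: a group hyperbolic relative to subgroups with at most quadratic Dehn function has itself at most quadratic Dehn function, and by Gromov's gap the exponent is then exactly $2$. (Since all the relevant pieces are biautomatic, one may alternatively route (i) through Rebbechi's theorem that a group hyperbolic relative to biautomatic subgroups is biautomatic.) Statement (i) is a combination-theorem assertion of Dahmani or Bigdely--Wise type, requiring one to verify the almost malnormality of this family and the relative quasiconvexity of the $\mathbb{Z}^2$ edge groups — essentially, to control how the mapping-torus $\mathbb{Z}$-direction threads through the JSJ of $G$.

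I expect statement (ii) to be the principal obstacle: it is the analogue for hyperbolic groups of Macura's theorem on mapping tori of polynomially growing automorphisms of free groups. Using the JSJ of $G_0$, a van Kampen diagram in $M_{\varphi|_{G_0}}$ should decompose into subdiagrams living in the vertex mapping tori — the $3$-manifold groups and the products $G_v\times\mathbb{Z}$, already known to be quadratic — joined along $\mathbb{Z}^2$-corridors running in the mapping-torus direction; since $\varphi|_{G_0}$ grows only polynomially, the lengths of these corridors, and hence the total area, can be bounded quadratically in the boundary length. Carrying out this corridor analysis — tracking how corridors interleave through the graph-of-groups structure, and reconciling the stable letters of the various $M_v$ into one coherent direction — is the technical heart of the argument; statement (i) is then comparatively routine bookkeeping. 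Assembling $M_\varphi$ from hierarchically hyperbolic pieces via a hierarchically hyperbolic combination theorem would give an alternative route to the same conclusion.
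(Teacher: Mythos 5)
Your proposal is an outline, not a proof: it reduces the theorem to your statements (i) and (ii) and then explicitly defers both. Statement (ii) --- the quadratic isoperimetric inequality for the mapping torus of the polynomially growing part --- is not a lemma you can wave at with a corridor heuristic; it is the entire technical content of the result. The assertion that ``since $\varphi|_{G_0}$ grows only polynomially, the lengths of these corridors, and hence the total area, can be bounded quadratically'' is exactly the hard step: in the free-group setting this claim is the subject of Macura's paper and of the Bridson--Groves memoir, and nothing about polynomial growth makes it formal. The paper handles this by a different and more specific route: by Levitt's structure theorem, for a one-ended torsion-free hyperbolic group a polynomially growing automorphism is, up to power and conjugacy, a Dehn twist on a cyclic graph-of-groups decomposition. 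After sliding moves that make the graph ``full-rooted,'' one takes the family $\mathscr{H}$ of cyclic subgroups generated by the roots of the edge elements, notes $G$ is hyperbolic relative to $\mathscr{H}$ with linear relative Dehn function, lifts the twist to the free group $F(\Gamma,v_0)$ to control relative lengths of $\varphi^{-l}(X_i)$, and builds a van Kampen diagram in which the number of $G$-cells is linear in $|z|$; then $t$-ring removal and room-moving surgeries push all $G$-cells to the boundary, and the remaining simply connected union of $t$-corridors is filled quadratically by invoking Bridson--Groves for the induced free-group mapping torus. None of this (nor any substitute for it) appears in your sketch, so the central claim remains unproved.

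Two further points. First, you call statement (i) ``comparatively routine bookkeeping,'' but the relative hyperbolicity of $M_\varphi$ with respect to the suspensions of the maximal polynomially growing subgroups is a substantial recent theorem of Dahmani--Krishna, which the paper cites rather than reproves; moreover your proposed peripheral family ($G_0\rtimes\mathbb{Z}$ plus $\mathbb{Z}^2$ cusps of pseudo-Anosov pieces, ``suitably merged'') need not be almost malnormal or even well defined as stated, since the subgraph spanned by the non-atoroidal vertices may be disconnected, is only $\varphi$-invariant up to conjugacy, and the genuine maximal polynomially growing subgroups can strictly contain it (they absorb boundary tori of the pseudo-Anosov pieces). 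If you cite Dahmani--Krishna and Farb/Osin for the peripheral-to-ambient Dehn function comparison, your top-level architecture matches the paper's; the gap is entirely in the polynomial core. Second, the remark that ``by Gromov's gap the exponent is then exactly $2$'' is both unnecessary and false in general (the mapping torus can be hyperbolic, hence linear); the theorem only asserts a quadratic upper bound.
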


The strategy of proof relies on two important results. First, generalized the work of Rips and Sela \cite{RS}, Levitt \cite{levitt2005automorphisms} gives a description, up to finite index, of the group of automorphisms of $G$, based on the JSJ decomposition. There is a clear picture of polynomially-growing automorphisms, which are multi-twists up to a power. Second, the recent work of Dahmani and Krishna \cite{dahmani2020relative} gives a relatively hyperbolic structure for the mapping torus. Given the work of Farb \cite{farb1998relatively} on the Dehn function of relatively hyperbolic groups, it is enough to consider the mapping torus of multi-Dehn twists. We establish the case of the Dehn twist by giving a standard form of the Van-Kampen diagram and work from there.      

This paper is organized as follows: first, in Section \ref{preliminary}, we provide a review of the necessary preliminary concepts. Section~\ref{oper: Van-Kampen} introduces a set of operations on the Van-Kampen diagram. These operations are employed to separate and estimate the number of primary cells and $t$-cells. Section~\ref{oper: graph of groups} presents a standardized form of Dehn twists achieved through specific operations on the graph of groups.
Finally, in Section~\ref{proof}, we combine the aforementioned results to present the proof of Theorem~\ref{main}.

\textbf{Acknowledgment}
The majority of this work was done at the University of Illinois Chicago when I was pursuing my Ph.D. degree. I would like to thank my advisor, Daniel Groves, who proposed this topic and provided many valuable ideas and suggestions. I would also like to thank the anonymous referees of an earlier draft. They drew my attention to relative Dehn functions, which turned out to be a central tool in solving the general case.

\section{Preliminaries}\label{preliminary}
In this section, we review some preliminary concepts in geometric group theory. We recommend consulting standard references such as \cite{MA}, \cite{R} for further exploration.

\subsection{(Relatively) Hyperbolic groups}

The Cayley graph, originally introduced by Cayley's theorem \cite{C}, plays a significant role in combinatorial and geometric group theory. Let us begin by recalling its definition.

\begin{definition}
Given a generating set $S$ of a group $G$, the \emph{Cayley graph} of $G$ with respect to $S$, denoted by $\Gamma(G, S)$, is a directed labeled graph. The vertices correspond to elements of $G$, and there is an edge from $g$ to $h$ if and only if $h=gs$ for some $s\in S$. The edge is labeled by $s$.
\end{definition}

\begin{definition}
A group is \emph{$\delta$-hyperbolic} if every geodesic triangle in its Cayley graph is $\delta$-slim. This means that each side of the triangle is contained within the $\delta$-neighborhood of the union of the other two sides. If a group is $\delta$-hyperbolic for some $\delta$, it is referred to as a \emph{hyperbolic group}.
\end{definition}

A generalization of hyperbolic groups is the concept of relatively hyperbolic groups. There are multiple equivalent definitions available; for detailed discussions, refer to \cite{farb1998relatively}, \cite{BD}, \cite{O}. Here, we define it in terms of the coned-off Cayley graph, as proposed by Farb \cite{farb1998relatively}.

\begin{definition}
Let $\mathscr{H}=\{H_i\}$ be a family of finitely generated subgroups of a finitely generated group $G$, and $\Gamma$ be a Cayley graph of $G$. The coned-off Cayley graph of $G$ with respect to $\mathscr{H}$, denoted by $\hat{\Gamma}=\hat{\Gamma}(\mathscr{H})$, is obtained by adding a vertex $v(gH_i)$ for each left coset $gH_i$, $H_i\in \mathscr{H}$. Additionally, an edge of length $1/2$ is added from each vertex corresponding to elements in $gH_i$ to $v(gH_i)$, $H_i\in \mathscr{H}$.
\end{definition}

Now, we introduce the definition of weakly relatively hyperbolic groups.

\begin{definition}
The group $G$ is said to be \emph{weakly relatively hyperbolic} with respect to $\mathscr{H}$ if the coned-off Cayley graph $\hat{\Gamma}$ of $G$ with respect to $\mathscr{H}$ forms a negatively curved metric space.    
\end{definition}

To define relatively hyperbolicity, we introduce another geometrical property known as \emph{bounded coset penetration}. First, we define relative (quasi) geodesics.

Given a fixed generating set $S$ of $\Gamma$, we choose a set of words $\{y_i^j\}$ that generate all $H_i\in \mathscr{H}$ for each $i$. Given a path $w$ in $\Gamma$, we construct a path $\hat{w}$ as follows: we search for subwords $y_i^j$ in $w$ and select only the leftmost word in case of overlaps. For each subword $z\in H_i$ as a product of $y_i^j$, let $z$ go from $g$ to $g\bar{z}$ in $\Gamma$. We then replace the path given by $z$ with an edge from the vertex $g$ to the cone point $v(gH_i)$, and an edge from $v(gH_i)$ to the vertex $g\bar{z}$. performing 
this operation for all such subwords and all $i$, we denote the new path in $\hat{\Gamma}$ by $\bar{w}$.

\begin{definition}
If $\hat{w}$ is a geodesic in $\hat{\Gamma}$, we call $w$ a relative geodesic in $\Gamma$. If $\hat{w}$ is a $P$-quasi-geodesic in $\hat{\Gamma}$, we call $w$ a relative $P$-quasi-geodesic in $\Gamma$. A path $w$ in $\Gamma$ (or $\hat{w}$ in $\hat{\Gamma}$) is said to be a path without backtracking if, for every coset the $\hat{w}$ penetrates, $\hat{w}$ never returns to $gH$ after leaving $gH$.
\end{definition}

\begin{definition}
The pair $(\Gamma,\mathscr{H})$ is said to satisfy the bounded coset penetration property (or BCP property for short) if, for every $P\geq 1$, there exists a constant $c$ such that if $u$ and $v$ are relative $P$-quasi-geodesics without backtracking with $d_{\Gamma}(\bar{u},\bar{v})\leq 1$, then the following conditions hold:

1. If $u$ penetrates a coset $gH_i$ but $v$ does not penetrate $gH_i$, then $u$ travels a $\Gamma$-distance of at most $c$ in $gH_I$.

2. If $u$ and $v$ penetrate the same coset $gH_i$, then the vertices in $\Gamma$ at which $u$, $v$ first enter $gH_i$ lie at a $\Gamma$-distance of at most $c$ from each other, and similarly for the vertices at which $u$, $v$ last exit $gH_i$.
\end{definition}

\begin{definition}
 Let $\mathscr{H}$ be a set of subgroups of a group $G$. $G$ is said to be relatively hyperbolic with respect to $\mathscr{H}$ if $G$ is weakly hyperbolic relative to $H$ and $(G,\mathscr{H})$ satisfies the BCP property.
\end{definition}

Although relative hyperbolicity is not easy to determine from its definition, the following theorem provides a powerful characterization. For a more comprehensive 
result in a different language, refer to \cite[Theorem 7.11]{BD}. 

\begin{definition}
A subgroup $H$ of a group $G$ is called \emph{malnormal} if for any $x\notin H$, $x^{-1}Hx$ and $H$ have trivial intersection.    
\end{definition}

\begin{theorem}\label{convex}
Let $G$ be a torsion-free hyperbolic group. If $\mathscr{H}$ is a finite family of quasiconvex malnormal subgroups of $G$, and the intersection of conjugates of any distinct pair of subgroups in $\mathscr{H}$ is trivial, then $G$ is relatively hyperbolic with respect to $\mathscr{H}$.
\end{theorem}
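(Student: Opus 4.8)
\emph{Proof strategy.} The plan is to verify directly the two clauses in the definition of relative hyperbolicity: that $G$ is weakly hyperbolic relative to $\mathscr{H}$, i.e.\ that the coned-off Cayley graph $\hat\Gamma=\hat\Gamma(\mathscr{H})$ is Gromov hyperbolic, and that the pair $(\Gamma,\mathscr{H})$ satisfies the BCP property. (The statement is in essence due to Bowditch; the scheme below is the one Farb \cite{farb1998relatively} uses to manufacture relatively hyperbolic structures out of quasiconvex subgroups, and alternatively one could deduce it from the characterisation cited above, \cite[Theorem 7.11]{BD}.)

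\emph{Weak relative hyperbolicity.} Because $\mathscr{H}$ is finite and each $H_i$ is quasiconvex in the hyperbolic group $G$, there is a single constant $K$ so that every left coset $gH_i$ is $K$-quasiconvex in $\Gamma$. It then suffices to show that coning off a $G$-invariant, uniformly quasiconvex family of subsets of a hyperbolic space yields a hyperbolic space. Concretely, I would take an edge cycle in $\hat\Gamma$, lift it to a cycle in $\Gamma$ that is geodesic along the segments outside the cosets and runs inside each penetrated coset between the two ``cone-adjacent'' vertices, fill the resulting $\Gamma$-cycle using the linear isoperimetric inequality for $G$, and transport the van Kampen diagram back to $\hat\Gamma$, using $K$-quasiconvexity to bound the area contributed by the inserted cone edges. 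This produces a linear isoperimetric function for $\hat\Gamma$, hence $\hat\Gamma$ is $\delta'$-hyperbolic for some $\delta'$.

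\emph{Bounded coset penetration.} The geometric heart of the matter is that $\mathscr{H}$ has \emph{bounded coarse intersection}: for each $R\geq 0$ there is $D=D(R)$ with $\operatorname{diam}_\Gamma\!\big(N_R(g_1H_i)\cap N_R(g_2H_j)\big)\leq D$ whenever $g_1H_i\neq g_2H_j$. This is exactly where the hypotheses on $\mathscr{H}$ are used. Were it false, a standard pigeonhole argument in the hyperbolic group $G$ — pass to an arbitrarily long geodesic segment fellow-travelled by both cosets, then translate it back by suitable elements of $H_i$ and $H_j$ — yields an infinite-order $\gamma\in G$ lying in a conjugate of $H_i$ and in a conjugate of $H_j$ at once; for $i=j$ this violates malnormality of $H_i$ (distinct conjugates would then share the infinite cyclic group $\langle\gamma\rangle$, and for a nontrivial malnormal subgroup $N_G(H_i)=H_i$, so distinct cosets give genuinely distinct conjugates), and for $i\neq j$ it violates the hypothesis that conjugates of distinct members of $\mathscr{H}$ meet trivially.

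\emph{Fellow-traveling and conclusion.} Granting bounded coarse intersection, BCP follows from the usual analysis of relative quasi-geodesics without backtracking. Such a path splits into the arcs it traces inside penetrated cosets — each a uniform quasi-geodesic of the intrinsically hyperbolic coset — and the complementary arcs, which are uniform $\Gamma$-quasi-geodesics staying uniformly close to honest $\Gamma$-geodesics joining consecutive penetrated cosets. Given relative $P$-quasi-geodesics $u,v$ without backtracking with $d_\Gamma(\bar u,\bar v)\leq 1$: if $u$ penetrates $gH_i$ but $v$ does not, then $v$ shadows $\Gamma$-geodesics that stay outside a large neighbourhood of $gH_i$ except near its two endpoints, and comparing these with the arc of $u$ inside $gH_i$, bounded coarse intersection forces that arc to have $\Gamma$-length at most a constant $c$; if both penetrate $gH_i$, the same comparison pins the entry points (resp.\ exit points) of $u$ and $v$ to within a constant of each other. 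All constants depend only on $P$, $\delta$, $K$ and $D(\cdot)$, which is BCP. The main obstacle is this last part together with establishing the bounded coarse intersection property cleanly; weak relative hyperbolicity is comparatively routine once the uniform quasiconvexity of the cosets is recorded.
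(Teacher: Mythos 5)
The paper does not actually prove Theorem~\ref{convex}: it is quoted as a known characterization, with the reader referred to Bowditch \cite[Theorem 7.11]{BD} (the torsion-free, quasiconvex, almost-malnormal case is classical and also appears in work of Kharlampovich--Myasnikov \cite{OA}), so there is no in-paper argument for you to match. Your sketch instead verifies Farb's definition directly, and its skeleton is the standard and correct one: uniform quasiconvexity of the cosets plus the fact that coning off a $G$-invariant uniformly quasiconvex family preserves hyperbolicity gives weak relative hyperbolicity; bounded coarse intersection of distinct cosets follows because an unbounded coarse intersection of $g_1H_i$ and $g_2H_j$ produces (by quasiconvexity and a pigeonhole/translation argument) a nontrivial, hence infinite-order, element of $g_1H_ig_1^{-1}\cap g_2H_jg_2^{-1}$, contradicting malnormality when $i=j$ (here the cleanest route is simply to apply malnormality to $x=g_1^{-1}g_2\notin H_i$, which makes your normalizer detour unnecessary) and the trivial-intersection hypothesis when $i\neq j$; and BCP then follows from the usual fellow-traveling analysis of relative quasi-geodesics. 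What your route buys is self-containedness relative to Farb's definitions as set up in Section~2, at the cost that the three load-bearing steps (hyperbolicity of the coned-off graph, the identification of coarse intersections of cosets with intersections of conjugates, and the derivation of BCP from bounded coarse intersection) are themselves nontrivial lemmas that you invoke rather than prove, so as written this is a correct outline that would still need either full arguments or citations (Farb, Bowditch, or Hruska) to stand as a complete proof; the paper's choice to cite \cite{BD} short-circuits exactly this work.
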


\subsection{(Relative) Dehn functions}

For a finitely presented group $G=\langle X; R\rangle$, a word $w\in F(X)$ represents the identity element in G if and only if it can be expressed as a finite product of conjugates of elements in $R$. Among all such representations, we consider the smallest integer $n$ such that $w$ can be represented by a product of conjugates of $n$ elements in $R$. We refer to $n$ as the \emph{area} of $w$. 

Van Kampen diagrams are standard tools for studying isoperimetric functions in groups. For detailed explanations, refer to \cite{M1}. The idea behind a van Kampen diagram is to 
associate a simply connected planar 2-complex with a word $w=_G1$,  where the 2-cells are labeled by relators and the boundary is labeled by $w$. 

The \emph{area} of a van Kampen diagram corresponds to the number of 2-cells it contains. The area of $w$, as defined earlier, represents the minimum area of a van Kampen diagram 
with a boundary labeled by $w$.
 
For a finitely presented group $G=\langle X;R\rangle$, the \emph{Dehn function} is defined as: $$D(n)=\max\{{\mathrm{Area}}(w)|w=_G1,|w|\leq n\}.$$

It is worth noting that different presentations of the same group yield equivalent Dehn 
functions \cite{M1}. Two functions $f$ and $g$ are considered equivalent if $f\preceq g$ and $g\preceq f$, where $\preceq$ indicates the existence of a constant $C>0$ such that: $$f(n)\leq Cg(Cn+C)+Cn+C.$$

The Dehn function of a group is defined as the equivalence class of functions. Relative 
Dehn functions can also be defined; for detailed discussions, refer to \cite{O}.

Let $\mathscr{H}$ be a set of subgroups of $G$. If $G$ is generated by $\mathscr{H}$ and a finite set $\{x_1,\dots,x_m\}$, we say that $G$ is \emph{finitely generated with respect to $\mathscr{H}$}, and $G$ can be presented as
$$G=\langle \mathscr{H}, x_1, \dots, x_m;\mathscr{R}\rangle,$$
where $\mathscr{R}$ is the set of relators needed to incorporate $x_1,\dots,x_m$ with $\mathscr{H}$. This presentation is referred to as \emph{the relative presentation of $G$ with respect to $\mathscr{H}$}. If $\mathscr{R}$ is also a finite set, $G$ is said to be \emph{finitely presented with respect to $\mathscr{H}$}.

Given such a relative presentation, a word is expressed as a product of words in $H_i\in \mathscr{H}$ and $\{x_1,\dots,x_m\}$. The \emph{relative length of a word $z$ with respect to $\mathscr{H}$}, denoted by $|z|_{\mathscr{H}}$, is defined as the number of $x_i$ and words in $\mathscr{H}$. In particular, any word in $H_i\in\mathscr{H}$ has length $1$.

A word $z=_G1$ can be expressed as a finite product of elements in $H_i\in\mathscr{H}$ and conjugates of elements in $\mathscr{R}$. The \emph{area of $z$ with respect to $\mathscr{H}$}, denoted by ${\mathrm{Area}}_{\mathscr{H}}(z)$, is defined as the minimal number of elements in $\mathscr{R}$ required for such a product. The \emph{relative Dehn function of this presentation} is defined as
$$D_{\mathscr{H}}(n)=\max\{{\mathrm{Area}}_{\mathscr{H}}(z)|z=_G1,|z|_{\mathscr{H}}\leq n\}.$$

The following theorem establishes a powerful connection between relative Dehn 
functions and relative hyperbolicity. For further details, refer to \cite[Theorem 1.5]{O}.

\begin{theorem}\label{relative}
Let $G$ be a finitely generated group and $\mathscr{H}$ be a finite collection of subgroups. Then the following conditions are equivalent:
\begin{enumerate}
\item
G is finitely presented with respect to $\mathscr{H}$, and the corresponding relative Dehn function is linear.
\item
G is hyperbolic relative to $\mathscr{H}$.
\end{enumerate}
\end{theorem}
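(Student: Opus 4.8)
My plan is to prove the two implications separately, in each case passing freely between three models of the same object: the coned-off Cayley graph $\hat\Gamma=\hat\Gamma(\mathscr H)$, van Kampen diagrams over a relative presentation $G=\langle\mathscr H,X;\mathscr R\rangle$ with $X=\{x_1,\dots,x_m\}$ a finite set generating $G$ together with $\mathscr H$, and the coned-off Cayley complex $\hat X$ obtained from $\hat\Gamma$ by gluing a $2$-cell along every loop labelled by a word of $\mathscr R$. The bookkeeping that makes the passage work is that a single $H_i$-syllable of a relative word is, in $\hat\Gamma$, a detour of length $1$ through a cone point; hence the relative length of a word and the combinatorial length of the corresponding path in $\hat\Gamma$ agree up to a factor of $2$, and the relative area of a word over $\langle\mathscr H,X;\mathscr R\rangle$ agrees with the number of $\mathscr R$-cells of a filling disc in $\hat X$ (the remaining ``$H_i$-cells'', with boundary an arbitrary word of $H_i$ trivial in $H_i$, are free and not counted).

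For (2) $\Rightarrow$ (1), I would assume $G$ is hyperbolic relative to $\mathscr H$, so $\hat\Gamma$ is hyperbolic and $(\Gamma,\mathscr H)$ has the BCP property. Hyperbolicity of $\hat\Gamma$ supplies a constant $D$ such that every combinatorial loop in $\hat\Gamma$ is null-homotopic through loops of length $\le D$; translating, every relative word $z=_G1$ is, modulo the free $H_i$-relations, a product of conjugates of relative relators of relative length $\le D'$. I would then invoke BCP — more precisely the almost malnormality of the peripherals that it forces — to exclude relators in which a long element of some $H_i$ is conjugated by elements outside $H_i$, so that only finitely many combinatorial types of relator of bounded relative length can occur; this gives a finite relative presentation $G=\langle\mathscr H,X;\mathscr R\rangle$. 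Finally, for $z=_G1$ with $\abs{z}_{\mathscr H}\le n$ the loop $\hat z$ in $\hat\Gamma$ has length $O(n)$ and, by the linear isoperimetric inequality of the hyperbolic complex $\hat X$, bounds a disc built from $O(n)$ of the uniformly bounded $\mathscr R$-cells; reading off the associated van Kampen diagram gives $\mathrm{Area}_{\mathscr H}(z)=O(n)$, so the relative Dehn function is linear.

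For (1) $\Rightarrow$ (2), I would assume $G=\langle\mathscr H,X;\mathscr R\rangle$ is a finite relative presentation with $\mathrm{Area}_{\mathscr H}(z)\le C\abs{z}_{\mathscr H}$. Since $\mathscr R$ is finite, every $\mathscr R$-cell of $\hat X$ has perimeter at most some fixed $M$, so $\hat X$ is a simply connected $2$-complex of bounded geometry whose combinatorial isoperimetric function coincides with the relative Dehn function and is therefore linear. The combinatorial form of Gromov's isoperimetric criterion — a simply connected $2$-complex with uniformly bounded $2$-cells and a linear isoperimetric inequality has Gromov-hyperbolic $1$-skeleton — then shows $\hat\Gamma$ is hyperbolic, i.e.\ $G$ is weakly relatively hyperbolic with respect to $\mathscr H$. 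To get BCP I would take relative $P$-quasigeodesics $u,v$ without backtracking with $d_\Gamma(\bar u,\bar v)\le1$, fill the loop formed by $u$, $v$ and the two short connecting arcs by a van Kampen diagram $\Delta$ over the relative presentation of area $O(\mathrm{length})$, and analyse $\Delta$: because the sides of $\Delta$ are $\hat\Gamma$-quasigeodesics they stay uniformly Hausdorff-close in $\hat\Gamma$ by hyperbolicity, and a connectivity argument inside $\Delta$ tracking the sub-diagram attached along a given coset $gH_i$ then forces that $u$ can travel only a bounded $\Gamma$-distance in any coset $v$ misses, and that $u,v$ enter and exit any common coset at uniformly close points, with the bound depending only on $P$, $C$ and $M$.

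The routine parts are the dictionary between $\hat\Gamma$, $\hat X$ and relative van Kampen diagrams, and the ``bounded geometry $+$ linear isoperimetric $\Rightarrow$ hyperbolic $1$-skeleton'' step, which goes through once one checks $\hat X$ has bounded cell size. The real difficulty is the interface between the \emph{metric} BCP condition and the \emph{combinatorial} linear relative Dehn function: extracting BCP from a linear area bound demands a delicate dissection of van Kampen diagrams over the relative presentation — controlling how the pieces glued along a single peripheral coset sit relative to the rest of the diagram — and I expect to need the full hyperbolicity of $\hat\Gamma$, not merely its linear isoperimetric inequality, to rule out the long thin ``tendrils'' in the diagram that would otherwise let a relative quasigeodesic drift deep into a coset undetected. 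This is where Osin's argument does its real work.
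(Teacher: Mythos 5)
The paper itself gives no proof of this statement: it is quoted as a known theorem of Osin (the text points to \cite[Theorem 1.5]{O} together with the surrounding discussion), so your attempt has to be judged against Osin's argument rather than against anything in this paper. Your road map is the standard one (dictionary between the coned-off Cayley graph $\hat\Gamma$, a relative presentation, and relative van Kampen diagrams, then one implication per direction), and the bookkeeping paragraph is essentially right; but both implications are left with gaps at exactly the points where the real content of the theorem sits.

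In (2) $\Rightarrow$ (1), hyperbolicity of $\hat\Gamma$ does give fillings of loops by loops of bounded length, hence relators of bounded \emph{relative} length; but this is an infinite set of words, since each peripheral syllable ranges over all of the infinite group $H_i$, and $\hat\Gamma$ is not locally finite, so ``only finitely many combinatorial types of relator'' does not follow from boundedness of relative length. Your proposed fix --- using BCP or almost malnormality to ``exclude relators in which a long element of some $H_i$ is conjugated by elements outside $H_i$'' --- is not an argument and is not really the mechanism: extracting a \emph{finite} set $\mathscr R$ is precisely the fineness/BCP issue to which Osin (and Bowditch, in his language) devote substantial work, and as written your sketch simply asserts it. In (1) $\Rightarrow$ (2), two points: first, $\hat X$ does not have bounded geometry (every vertex has infinite valence, and loops inside a peripheral coset must themselves be filled, e.g.\ by cone triangles), so the combinatorial isoperimetric function of $\hat X$ does not literally coincide with the relative Dehn function --- peripheral cells are counted in one and not the other; this is repairable, since the Gromov-type criterion only needs uniformly bounded $2$-cell perimeters and a linear filling function, but the comparison has to be argued, not asserted. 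Second, and more seriously, you explicitly defer the derivation of BCP from the linear relative area bound (``a connectivity argument inside $\Delta$ \dots forces \dots'', ``this is where Osin's argument does its real work''); that diagram dissection is the heart of this implication, so the proposal is an accurate outline of the known proof with its two hardest steps missing rather than a proof.
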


\subsection{Graphs of groups and Dehn twists}\label{Dehn Twist}

 In this section, we introduce graph of groups and Dehn twists. See \cite[Chapter 5]{S} and \cite[Section 5,6]{CM} for more details.

Let $\Gamma$ be a connected oriented graph. A graph of groups over $\Gamma$ is given by the following:

(1) Each vertex $v\in \Gamma$ is associated with a group $G_v$, called the vertex group.

(2) Each edge $e\in \Gamma$ is associated with a group $G_e$, called the edge group.

(3) For each edge $e$, there are monomorphisms $\alpha_{i_e}: G_e\rightarrow G_{i(e)}$ and $\alpha_{t_e}: G_e\rightarrow G_{t(e)}$, where $i(e)$ and $t(e)$ are the initial and terminal vertices of $e$.

Label all edges by $e_0,\dots,e_m$, and let $t_k$ be a stable letter assigned to $e_k$. Let $T$ be a maximal tree in $\Gamma$. Then there is a unique group, called the fundamental group associated to $\Gamma$, denoted by $\pi_1(\Gamma, T)$, defined as follows.

It is generated by all vertex groups together with all stable letters.
For each $k$, if $e_k\in T$, then $t_k=1$ and $\alpha_{i_{e_k}}(a)=\alpha_{t_{e_k}}(a)$ for all $a\in G_{e_k}$. If $e_k\notin T$, then $t^{-1}_k\alpha_{i_{e_k}}(a)t_k=\alpha_{t_{e_k}}(a)$ for all $a\in G_{e_k}$.

The group elements can also be defined by loops on $\Gamma$. Fix a basepoint $v_0$, let $c=e_{i_1}\dots e_{i_n}$ be a loop on graph based on $v_0$. Then a \emph{word of type $c$} is a sequence $x_0t_{i_1}\dots x_{n-1}t_{i_n}x_n$, where $x_0\in G(v_0)$ and $x_j\in G_{t(e_{i_j})}$. These words form a subgroup of the free group generated by generating sets of vertex groups and stable letters. We denote this subgroup by  $F(\Gamma, v_0)$ and by abusing language, we call these words loops, too. We can reduce a loop by changing the segment $t^{-1}_k\alpha_{i_{e_k}}(a)t_k$ to $\alpha_{t_{e_k}}(a)$, or $t_k\alpha_{t_{e_k}}(a)t^{-1}_k$ to $\alpha_{i_{e_k}}(a)$, thus defining an equivalence relation on the set of loops on $(\Gamma, v_0)$. The set of equivalence classes under concatenations forms a group, which we denote by $\pi_1(\Gamma, v_0)$. 

There is a canonical projection: $p: \pi_1(\Gamma, v_0)\rightarrow \pi_1(\Gamma,T)$, by deleting those stable letters corresponding to edges in $T$. This projection is actually an isomorphism. See \cite[Chapter 5]{S} for details.

In this paper, all the edge groups are infinite cyclic groups $\mathbb Z=\langle s\rangle$. Then the monomorphisms associated to the edge $e$ are determined by $\alpha_{i_e}(s)$ and $\alpha_{t_e}(s)$. In this case, we pick $u_e=\alpha_{i_e}(s)$ and $w_e=\alpha_{t_e}(s)$ for each edge $e$, and we call them \emph{initial} and \emph{terminal} elements of the edge $e$. Up to conjugacy, they determine the same element in $G$, which we call the edge element.

For example, the amalgamation can be expressed as a tree of groups with two vertices and one edge connecting them. The HNN-extension can be expressed as a graph of groups with one vertex and one edge. 

For graph of groups with cyclic edge groups, we can define a basic Dehn twist associated to each edge.

\begin{definition}
For each edge $e_k$, the basic Dehn twist $\varphi_{k}$ is defined as follows:
\begin{enumerate}
\item 
Suppose that $e_k$ is in the maximal tree $T$, and it separates the graph $\Gamma$ into two components, say  $\Gamma_i$ and $\Gamma_t$, which contains $i({e_k})$ and $t({e_k})$, respectively. Let the initial and terminal elements of $e_k$ be $u_k$ and $w_k$, respectively. Then $\varphi_k$ is the automorphism that restricts to a conjugation by $u_k$ on $\Gamma_i$ and restricts to the identity on $\Gamma_t$.
\item 
Supppose that $e_k$ is in the maximal tree $T$ but does not separate the graph $\Gamma$. Since it separates the tree $T$ into two components, it defines an automprphism on $G(T)$ as above. This also induces a map on stable letters not in $T$: if $e_j$ connects $T_i$ and $T_t$ in the same direction as $e_k$ does, then $\varphi_k$ maps the corresponding stable letter $t_j$ to $u_k^{-1}t_j$; if $e_j$ connects $T_i$ and $T_t$ in the reverse direction as $e_k$ does, then $\varphi_k$ maps $t_j$ to $t_ju_k$.
\item 
Suppose that $e_k$ is not in $T$, then
$\varphi_k$ maps $t_k$ to $u_kt_k$ where $u_k$ is the initial element, and is the identity on other stable letters and all vertex groups.  
\end{enumerate}
\end{definition}

If $\bar{e}$ is the reverse edge of $e$, we replace $w_e$ and $u_e$ by $w_e^{-1}$ and $u_e^{-1}$ respectively on $\bar{e}$. In this way, we have $\varphi_{\bar{e}}(x)=u_e\varphi_e(x)u_e^{-1}$. So, for reverse edges, their corresponding Dehn twists differ by a conjugation.

Let $e$, $e'$ be different edges in $\Gamma$, then direct calculation shows that $\varphi_{e'}\varphi_e$ and $\varphi_{e}\varphi_{e'}$ also differ by a conjugation.

In general, we call $\varphi$ a \emph{Dehn twist} if it is a composition of some basic Dehn twists. According to previous discussions, all Dehn twists form an abelian subgroup of $\mathrm{Out}(G)$. This subgroup is, in some sense, generated by all edges regardless of their orientations. Based on this, we can also talk about the Dehn twist on an undirected graph of groups, but we still use orientation to indicate the exact Dehn twists in this paper.

\subsection{Mapping torus}
 Let us recall the definition of the mapping torus. For any automorphism $\varphi$ of a group $G$, the algebraic mapping torus is defined by 
 $$M_{\varphi}=\langle G,t;t^{-1}at=\varphi (a), a\in G\rangle.$$ Note that $G$ and $\langle t\rangle$ are embedded subgroups in the mapping torus.

In 2010, Bridson and Groves \cite{bridson2010quadratic} proved the following theorem about mapping tori of free groups. 

\begin{theorem}\label{3}
If $F$ is a finitely generated free group and $\varphi$ is an automorphism of $F$, then the mapping torus $M_{\varphi}$ satisfies a quadratic isoperimetric inequality.
\end{theorem}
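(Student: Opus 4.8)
The plan is to reduce to the case of a polynomially growing automorphism and then treat that case by a direct analysis of van Kampen diagrams.

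First I would dispose of the exponentially growing part. Starting from a relative train track representative of $\varphi$ in the sense of Bestvina--Feighn--Handel, the exponentially growing strata can be handled by a combination-theorem argument — of the kind later systematized by Dahmani--Krishna \cite{dahmani2020relative} — to conclude that $M_\varphi$ is hyperbolic relative to a finite family $\mathscr{H}$ of subgroups, each of which is a mapping torus $M_\psi$ of a \emph{polynomially growing} automorphism $\psi$ of a free group of smaller rank (the mapping tori of the maximal polynomially growing free factor systems). Assuming, by the next steps, that each such $M_\psi$ already satisfies a quadratic isoperimetric inequality, Theorem~\ref{relative} gives that the relative Dehn function of $M_\varphi$ with respect to $\mathscr{H}$ is linear, and then Farb's estimate \cite{farb1998relatively}, bounding the Dehn function of a relatively hyperbolic group in terms of those of its peripheral subgroups (a group hyperbolic relative to subgroups with quadratic Dehn functions has quadratic Dehn function), shows that $M_\varphi$ itself satisfies a quadratic isoperimetric inequality. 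Hence it suffices to treat polynomially growing $\varphi$.

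Since $M_{\varphi^k}$ has index $k$ in $M_\varphi$ and Dehn functions are preserved under passage to finite-index subgroups, I may replace $\varphi$ by a power and assume it is unipotent polynomially growing. Such an automorphism admits a train track representative on a graph $R$ with a filtration $R_0\subset R_1\subset\cdots\subset R_N=R$ in which $R_i=R_{i-1}\cup E_i$ for a single edge $E_i$ and $\varphi(E_i)=E_i\,u_i$ with $u_i$ a loop in $R_{i-1}$; equivalently there is a free basis $x_1,\dots,x_n$ of $F$ with $\varphi(x_i)=x_i v_i$, where $v_i$ is a word in $x_1,\dots,x_{i-1}$. The crucial consequence is the triangular length bound $|\varphi^m(x_i)|\le C_i\,m^{i-1}$ for all $m\ge 0$.

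Now I would analyze a van Kampen diagram $\Delta$ for an arbitrary word $w=_{M_\varphi}1$ with $|w|\le n$. Because $F$ is free, the presentation $\langle x_1,\dots,x_n,t\mid t^{-1}x_i t=\varphi(x_i)\rangle$ has relators only of the form $t^{-1}x_i t\,\varphi(x_i)^{-1}$, so every $2$-cell of $\Delta$ lies on a $t$-corridor and $\mathrm{Area}(\Delta)$ is the total length of its $t$-corridors; there are $O(n)$ of these, and crossing a corridor at height $h$ turns an $F$-word $p$ along its bottom into the word $\varphi^{\pm1}(p)$ along its top at height $h\pm1$, read as a concatenation and hence a priori unreduced. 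The naive bound, multiplying side-lengths by $\max_i|\varphi(x_i)|$ at each of up to $n$ levels, is exponential and useless; the content of the proof is that the triangular structure forces the sum of the side-lengths over all corridors to be $O(n^2)$. Making this precise is the heart of the matter: one introduces a height function on $\Delta$, groups the corridors into stacks, and argues by induction on the filtration length $N$, using at each step that the edge $E_i$ interacts only with the already-controlled subdiagram over $\langle x_1,\dots,x_{i-1},t\rangle$ and that the train track map has bounded cancellation. This is exactly Macura's argument \cite{macura2000quadratic} in the linearly growing case, and its extension to arbitrary polynomial degree is the substance of Bridson--Groves \cite{bridson2010quadratic}. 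The main obstacle I anticipate is precisely this estimate: controlling, via bounded cancellation and the triangular form of $\varphi$, how the side-lengths of corridors propagate through a stack, so that what could a priori be superquadratic — indeed exponential — in fact telescopes to $O(n^2)$.
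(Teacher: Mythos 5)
You are trying to prove a statement that this paper does not prove at all: Theorem \ref{3} is imported verbatim from Bridson--Groves \cite{bridson2010quadratic} and used as a black box (indeed, the paper later invokes it to refill the corridor subdiagram in the proof of Proposition \ref{graph}). So your attempt has to be judged as a stand-alone proof, and as such it has a genuine gap rather than a complete alternative argument. Your overall architecture is reasonable and is, in fact, the same one this paper uses for its main theorem: split off the exponential part via the relative hyperbolicity of \cite{dahmani2020relative}, use Theorem \ref{relative} and Farb's comparison of Dehn functions with peripheral Dehn functions \cite{farb1998relatively}, and reduce to the mapping torus of a polynomially growing automorphism; the finite-index reduction to a unipotent power is also fine, since $M_{\varphi^k}$ has index $k$ in $M_\varphi$ and Dehn functions are preserved under passage to finite index.

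The gap is exactly where you say "the heart of the matter" lies. For the polynomially growing case you assert that the total length of the $t$-corridors telescopes to $O(n^2)$, but you do not define the height function, carry out the induction on the filtration, or do the bounded-cancellation bookkeeping that controls how corridor lengths propagate through a stack; instead you defer all of this to Macura \cite{macura2000quadratic} and to Bridson--Groves \cite{bridson2010quadratic}. Since the latter is precisely the source of the statement being proved, the argument as written is circular, and even the appeal to Macura is a citation of the entire hard estimate rather than a proof of it (note also that \cite{macura2000quadratic} already covers all polynomially growing automorphisms, not only the linearly growing ones, so your attribution understates what is being borrowed). Nothing in your text rules out the superquadratic accumulation of corridor lengths that you yourself identify as the main obstacle. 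If quoting the polynomially growing case from the literature were permitted, your outline would stand as a clean reduction of the general case to that one; as a proof of Theorem \ref{3} it is incomplete at its central step.
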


\section{Operations on van Kampen diagrams}\label{oper: Van-Kampen}
In this section, we will discuss the geometric aspects of mapping tori and explore van Kampen diagrams and some operations on them. An operation refers to a procedure that changes a van Kampen diagram to a new one while keeping the boundary label unchanged.

Without loss of generality, we assume that all relators are reduced words, and the words under consideration are also reduced. This ensures that the boundary label of a van Kampen diagram is always a reduced word.

\subsection{Foldings on diagrams}
Similar to geometry, a \emph{path} in a van Kampen diagram is a continuous map $f$ from $[0, N]$ to the one-skeleton of the diagram. We require the path to be combinatorial, meaning that it is a homeomorphism from $[k, k + 1]$ to a 1-cell for each $k \in {0, \ldots, N - 1}$. Concatenation of paths is defined in the usual way. A path is called a \emph{loop} if $f(0) = f(N)$. A path is called \emph{tight} if it does not contain a backtracking, which means that $f([k - 1, k])$ and $f([k, k + 1])$ are not inverse edges for any $k$. The \emph{label} of a path is the label of its image, and it is reduced if the label is a reduced word. In this paper, we do not distinguish between a path and its label.

If a path has backtracking, we can remove all backtracking from the path, resulting in a tight path. Note that we do not change the diagram itself, but only modify the path. A tight path may not be reduced, in which case we need to change the diagram in order to reduce it.

Stallings foldings are applied to eliminate unreduced labels in a graph. However for two-dimensional complexes, like planar van-Kampen diagrams, we cannot guarantee that all paths are reduced. Nonetheless, we have some operations for folding certain edges based on our needs.

Here we introduce two kinds of foldings on a van Kampen diagram: directed folding and rotated folding.

\subsubsection{Directed Folding}

Suppose there is a path passing through three different vertices $A$, $B$, and $C$ consecutively, where $AB = a$ and $BC = a^{-1}$. We call such a segment $ABC$ an \emph{unreduced segment}. Note that it is not a backtrack. To fold this part, we need to consider whether there are other edges with $B$ as a vertex on both sides of $ABC$.

\begin{figure}
	\includegraphics[width=100mm]{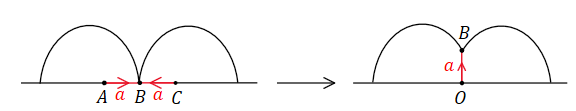}
	\caption{Direct folding}
	\label{fold1}
\end{figure}

If there are no such edges on one side of the path, we can fold it on that side. This folding is called direct folding. Figure \ref{fold1} illustrates this folding. After this folding, the unreduced segment turns into a backtrack, and we can remove it from the path.

Note that if there are no such edges on the other side either, it results in a vertex of valence 1 on this folded edge, which we call a \emph{branch}. We can remove this branch by deleting the edge but keeping the vertex that is also on other edges.

\begin{figure}
	\includegraphics[width=100mm]{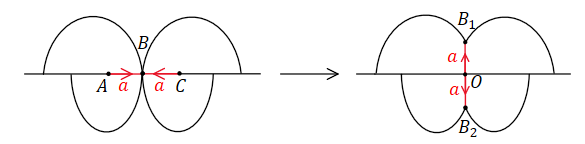}
	\caption{Rotated folding}
	\label{fold2}
\end{figure}

\subsubsection{Rotated Folding}

Suppose there are edges with vertex $B$ on both sides of the segment. Locally, $ABC$ divides the diagram into two parts, and it lies on the boundary of the closure of each part. In \emph{rotated folding}, we perform direct folding on each part separately and then glue them back along the new boundary. After this folding, vertices $A$ and $C$ are merged into one point, and we can remove the unreduced segment from the path.  Figure \ref{fold2} illustrates this folding.

If another segment of this path also passes through $B$ from one side of $ABC$ to the other side, we call it a \emph{crossing} at $B$. In this case, we need to insert a segment $a^{-1}a$ into this segment after the folding, so we cannot fully reduce it on this path. However, if a path has no crossings, the following lemma holds and the proof is straightforward.

\begin{lemma}\label{path}
Let $D$ be a van-Kampen diagram and  $\alpha$ be a path on $D$ without crossings. Then there is an operation on $D$, called reducing $\alpha$, such that $\alpha$ is transformed to a reduced path (the reduced form of $\alpha$), and the area of the diagram remains unchanged after this operation.
\end{lemma}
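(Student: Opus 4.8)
The plan is to treat the two kinds of folding introduced above as the elementary moves and to argue that, since $\alpha$ has no crossings, these moves can be applied along $\alpha$ without ever creating a new obstruction, so that finitely many of them produce a reduced path. First I would record the key local invariant: each directed or rotated folding, applied to an unreduced segment $ABC$ of $\alpha$ with $AB=a$ and $BC=a^{-1}$, identifies two $1$-cells that carry the same label and are glued along a common vertex, hence it does not change the number of $2$-cells of $D$ — the only cells that can disappear are degenerate (branch) edges that bound no $2$-cell, and removing a branch leaves the area untouched by definition. So ``the area remains unchanged'' will follow cell-by-cell from the description of the two operations; the real content is termination and the fact that the output path is genuinely reduced.

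Next I would set up the induction. Walk along $\alpha$ and look for the first unreduced segment $ABC$. Because $\alpha$ has no crossing at $B$, no other portion of $\alpha$ passes through $B$ from one side of $ABC$ to the other; therefore exactly one of two situations occurs: either one side of $ABC$ at $B$ carries no further edge (apply directed folding, possibly followed by a branch removal), or both sides do (apply rotated folding, which merges $A$ and $C$). In either case the unreduced segment of $\alpha$ becomes an honest backtracking, which we tighten away; crucially, since there was no crossing at $B$, the folding introduces no new $a^{-1}a$ into $\alpha$, so the tightened path is strictly shorter. I would also check that the foldings do not create new crossings on the (shortened) path: a crossing would require some sub-arc of $\alpha$ to traverse the merged vertex from side to side, but the local pictures in Figures \ref{fold1} and \ref{fold2} show the only arcs through the new vertex are the ones already incident in the old diagram, and ``no crossing'' is inherited. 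Hence after the move $\alpha$ is still a crossing-free path, now of smaller combinatorial length, and we may iterate.

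Finally, termination: the combinatorial length of $\alpha$ (the number of $1$-cells it traverses, counted with multiplicity) is a non-negative integer that strictly drops with each round, so after finitely many foldings $\alpha$ has no unreduced segment, i.e. it is reduced; call the resulting path the reduced form of $\alpha$. Throughout, the boundary $\partial D$ is untouched (the foldings are interior moves, and branch removals only delete valence-$1$ spurs) and the $2$-cell count is constant, which gives the stated conclusion. The main obstacle I expect is the bookkeeping in the rotated-folding step — verifying carefully that when both sides of $ABC$ have edges at $B$, cutting along $ABC$, directed-folding each side, and regluing along the new boundary is well defined as an operation on van Kampen diagrams and does not secretly change $\partial D$ or merge $2$-cells — together with making the ``no new crossing is created'' claim precise; everything else is a straightforward length-induction.
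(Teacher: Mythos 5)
Your argument is essentially the one the paper intends: it declares the lemma ``straightforward'' and omits a proof, since the directed and rotated foldings described just beforehand are exactly the moves needed, and the no-crossing hypothesis is precisely what rules out inserting $a^{-1}a$. Your write-up—iterating the foldings on the first unreduced segment, noting they preserve the number of $2$-cells and the boundary label, and terminating by induction on the combinatorial length of $\alpha$—is a correct filling-in of that same approach.
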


\subsection{The induced map on diagrams}\label{induced}
A group can always be presented as $G = \langle X; R \rangle = F / \llangle R \rrangle$, where $F$ is a free group generated by $X$ and $\llangle R \rrangle$ denotes the normal closure of $R$. If $\Phi$ is an automorphism of $F$ such that $\Phi(\llangle R \rrangle)=\llangle R \rrangle$, it induces an automorphism $\varphi$ on the quotient group $F / \llangle R \rrangle$. If an automorphism $\varphi$ of $G$ comes from an automorphism $\Phi$ of $F$ in this way, we say that $\varphi$ is \emph{induced from the free group automorphism $\Phi$}.

Suppose an automorphism $\varphi$ of $G$ is induced from the free group automorphism $\Phi$. Given a van Kampen diagram $D$ of a word $z =_ G1$ in $G$, there is another van Kampen diagram $\Phi(D)$ as follows:
\begin{enumerate}
    \item First, consider a cell labeled by a relator $r \in R$. Replace each edge representing a generator with a segment representing its image under $\Phi$. At this stage, there may be some unreduced segments in the diagram.

    \item Perform direct foldings on each unreduced segment and delete any created branches. Then, fill the new loop with the minimal number of cells. This process is valid since $\Phi$ maps $\llangle R \rrangle$ to $\llangle R \rrangle$. The resulting diagram is a van Kampen diagram with its boundary labeled by the reduced form of $\Phi(r)$. If we fix the way we fill each loop, it defines a map from all cells of $G$ to van Kampen diagrams of $G$.
\end{enumerate}

Now, $D$ is composed of cells labeled by generators of $G$. Denote the cells by $c_1, \ldots, c_n$. Follow these steps:
\begin{enumerate}
    \item Replace each edge representing a generator with a segment representing its image. The boundary of each cell $c_i$ is replaced by a new loop, and we consider these loops.

    \item If there is an unreduced segment on any loop, perform direct foldings to reduce it and delete any branches if present. Repeat this process a finite number of times until all unreduced segments on all loops are eliminated.

    \item All loops are now labeled by a reduced word, which is the image of the boundary of the corresponding cell. Fill each loop in a fixed way.

    \item The above steps result in a new van Kampen diagram. If the boundary of this diagram is not reduced, perform direct foldings to reduce it.
\end{enumerate}

Finally, we obtain a van Kampen diagram with its boundary labeled by the reduced form of $\Phi(z)$, and we denote this diagram by $\Phi(D)$. Although $\Phi(D)$ is not unique and depends on the order in which we fold all unreduced segments and the filling of the loops, the area of $\Phi(D)$ is well-defined.

If $R =\{r_1, \ldots, r_m\}$ is finite, there are finitely many different cells, denoted by $C_1, \ldots, C_m$ respectively. Let $A_i$ denote the area of $\Phi(C_i)$. Let $A = \max A_i$. For any van Kampen diagram $D$, we have $\text{Area}(\Phi(D)) \leq A \cdot \text{Area}(D)$.

A special case arises when $A = 1$, which means the image of one cell is exactly another cell. In this case, the image of a van Kampen diagram has the same area, and we say that $\Phi$ is \emph{area-preserving}. Note that if $\Phi$ is area-preserving, $\Phi^l$ is also area-preserving for any integer $l$.

\subsection{Room moving}
In a van Kampen diagram of the mapping torus $M_\varphi = {G, t; t^{-1}at = \varphi(a), a \in G}$, the cells consist of those from relations in $G$ and those from the relations $t^{-1}at = \varphi(a)$. We refer to the cells of the first type as \emph{primitive cells} and the cells of the second type as \emph{t-cells}.

Within the diagrams of mapping tori, t-corridors have been extensively studied, see \cite{bridson2010quadratic}. A \emph{t-corridor} is essentially a series of t-cells connected along t-sides. Figure \ref{t-corridor} illustrates an example of a t-cell and a t-corridor.

In this paper, t-corridors are considered to be maximal, meaning they cannot be extended further from either side. When it is clear from the context, we may also refer to a t-corridor simply as a corridor.

\begin{figure}
\includegraphics[width=90mm]{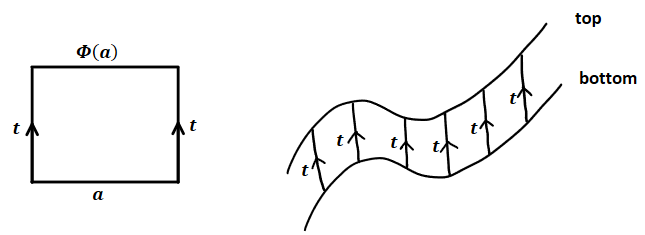}
\caption{}
\label{t-corridor}
\end{figure}

Depending on the structure of the t-cell, a t-corridor either starts and ends on the boundary or forms a ring, which we call a \emph{t-ring}. However, due to the following lemma, we only need to consider corridors of the first type.

\begin{lemma}
In a van Kampen diagram, there is an operation that removes each $t$-ring. Furthermore, if $\Phi$ is an area-preserving map, the number of primitive cells remains unchanged after this operation.
\end{lemma}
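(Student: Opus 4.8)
### Proof proposal

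The plan is to describe an explicit surgery on a $t$-ring and then check that it does not increase the number of primitive cells (and keeps it constant when $\Phi$ is area-preserving). Recall that a $t$-ring $\mathcal{R}$ is an annular sequence of $t$-cells glued along their $t$-sides; it has an \emph{inner} boundary circle $\sigma_{\mathrm{in}}$ and an \emph{outer} boundary circle $\sigma_{\mathrm{out}}$, each labeled by a word in the generators of $G$ (no $t$'s), and the defining relations of the $t$-cells say precisely that the word on one side is the $\varphi$-image of the word on the other. Without loss of generality orient things so that reading $\sigma_{\mathrm{in}}$ gives a word $z$ and reading $\sigma_{\mathrm{out}}$ gives $\Phi(z)$ up to free reduction and cyclic conjugation. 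The annulus $\mathcal{R}$ bounds, on its inner side, a subdiagram $D_{\mathrm{in}}$ consisting only of primitive cells (any $t$-corridor entering the inner disk would have to cross $\mathcal{R}$, which is impossible since $\mathcal{R}$'s $t$-cells are all used up along the ring).

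First I would excise the closed inner disk $D_{\mathrm{in}} \cup \mathcal{R}$ from $D$, leaving a hole bounded by $\sigma_{\mathrm{out}}$, labeled by (the reduced/cyclically-reduced form of) $\Phi(z)$. Then I would fill that hole with $\Phi(D_{\mathrm{in}})$, the induced diagram from Section~\ref{induced}: since $D_{\mathrm{in}}$ is a van Kampen diagram for $z$ built entirely of primitive cells, $\Phi(D_{\mathrm{in}})$ is a van Kampen diagram for $\Phi(z)$ built entirely of primitive cells, and by construction its boundary is the reduced form of $\Phi(z)$, so it glues into the hole after the folding/reduction bookkeeping of Lemma~\ref{path} (the outer circle has no crossings with the rest of $D$ because it was an embedded boundary circle of a ring). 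This removes the ring $\mathcal{R}$ entirely. Iterating over all $t$-rings — innermost first, so that each $D_{\mathrm{in}}$ really is ring-free and primitive-only — terminates since each step strictly decreases the number of $t$-cells.

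For the count: the operation deletes all $t$-cells of $\mathcal{R}$ and replaces the primitive subdiagram $D_{\mathrm{in}}$ (say $N$ cells) by $\Phi(D_{\mathrm{in}})$ (at most $A\cdot N$ cells, in the notation of Section~\ref{induced}); the rest of $D$ is untouched, so the number of primitive cells changes by at most a bounded factor and, crucially, when $\Phi$ is area-preserving we have $A=1$, so $\Phi(D_{\mathrm{in}})$ has exactly $N$ primitive cells and the total primitive-cell count is unchanged. (One should also handle the degenerate case where $z$ freely reduces to the trivial word, or where $\sigma_{\mathrm{in}}$ bounds no cells at all: then $D_{\mathrm{in}}$ is empty, $\Phi(D_{\mathrm{in}})$ is empty, and the ring is simply collapsed.)

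The main obstacle I anticipate is not the counting but making the cut-and-paste rigorous as an \emph{operation on planar diagrams}: one must verify that $\sigma_{\mathrm{out}}$ is genuinely an embedded circle separating $\mathcal{R}$'s interior from everything else (so that excision and regluing produce a bona fide simply connected planar 2-complex with the same boundary word), and that after inserting $\Phi(D_{\mathrm{in}})$ any newly created unreduced segments along the seam lie on a path without crossings, so Lemma~\ref{path} applies without disturbing the cell count. A subtlety is that a single $t$-corridor in the ambient diagram might abut $\mathcal{R}$ from outside along $\sigma_{\mathrm{out}}$; but since we only replace the material strictly inside $\sigma_{\mathrm{out}}$, such a corridor is untouched, and the seam reduction is purely among primitive cells. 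Once these planarity points are nailed down, the lemma follows.
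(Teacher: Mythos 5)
Your proposal is correct and takes essentially the same route as the paper: excise everything inside the outer boundary circle of the ring (the ring together with the enclosed subdiagram $A$) and refill the hole with $\Phi(A)$ (or $\Phi^{-1}(A)$ when the $t$-sides face inward), so that area-preservation of $\Phi$, hence of $\Phi^{-1}$, keeps the number of primitive cells unchanged. Your innermost-first ordering and the seam/planarity remarks only add detail to the same argument, and your ``WLOG'' on orientation is exactly the paper's explicit $\Phi^{-1}$ case.
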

\begin{proof}
\begin{figure}
	\includegraphics[width=90mm]{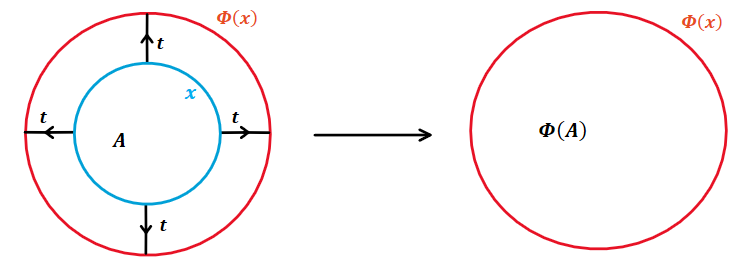}
	\caption{Remove a t-ring}
	\label{t-ring}
\end{figure}

The tops and bottoms of all $t$-rings do not have crossings with each other, allowing for simultaneous reduction. Hence we may assume that they are reduced.

Suppose the inner loop in a t-ring is labeled by $x$, and the enclosed subdiagram is denoted as $A$. The operation involves removing everything inside the outer loop, including $A$ and the t-ring, and refilling it with $\Phi(A)$. Figure \ref{t-ring} provides an illustration of this operation. If the t-sides of a t-ring are facing inward, the same result follows by replacing $\Phi$ with $\Phi^{-1}$.

If $\Phi$ is an area-preserving map, then $\Phi^{-1}$ is also area-preserving, see Section \ref{induced}. So the area of $\Phi(A)$ or $\Phi^{-1}(A)$ is the same as the area of $A$. Thus, the number of primitive cells remains unchanged after each operation.
\end{proof}

For the remainder of the paper, all corridors are assumed to start and end on the boundary. In this case, a corridor divides the diagram into two parts: one on the bottom side and one on the top side. This allows us to discuss the relative position of other cells or subdiagrams with respect to this corridor.

\begin{definition}
In a van Kampen diagram, a \emph{room} is the closure of one component after deleting all $t$-corridors. A $t$-corridor \emph{bounds a room} if they have a non-empty intersection.
\end{definition}
It is worth noting that if a t-corridor bounds a room, the intersection forms a connected path or a point.

Additionally, the paper introduces another operation called \emph{room moving}, which essentially allows for moving a room across a corridor. The following lemma describes this operation:

\begin{lemma}\label{7}
If a $t$-corridor bounds a room in a van Kampen diagram, there is an operation that moves all primitive cells within this room to the other side of the corridor. If $\Phi$ is an area-preserving map, the number of primitive cells in the diagram remains unchanged after the operation.
\end{lemma}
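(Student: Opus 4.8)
The plan is to describe the operation explicitly and then verify that it does not change the boundary label and, under the area-preserving hypothesis, does not change the number of primitive cells. Let $\mathcal{C}$ be the $t$-corridor that bounds the room $\mathcal{R}$, and let $\gamma$ denote the connected path $\mathcal{C} \cap \mathcal{R}$ sitting on one of the two long sides of $\mathcal{C}$ (the case where the intersection is a single point is trivial, since then $\mathcal{R}$ contributes no cells adjacent to $\mathcal{C}$ and there is nothing to move). Say $\gamma$ lies on the bottom side of $\mathcal{C}$, so $\mathcal{R}$ is attached to $\mathcal{C}$ from below; the word read along $\gamma$ from inside $\mathcal{R}$ is some word $x$ in the generators of $G$, and since $\mathcal{C}$ is a $t$-corridor, the opposite (top) side of $\mathcal{C}$ directly across from $\gamma$ reads $\Phi(x)$ (up to the direction in which the $t$-cells point; if they point the other way one uses $\Phi^{-1}$, exactly as in the $t$-ring lemma). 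First I would cut out the subdiagram $\mathcal{R}$ along $\gamma$, together with the sub-band of $\mathcal{C}$ of $t$-cells whose bottom edges lie on $\gamma$. Then I would reglue $\mathcal{R}$ on the top side of $\mathcal{C}$: replace $\mathcal{R}$ by $\Phi(\mathcal{R})$ as constructed in Section \ref{induced}, whose boundary is the reduced form of $\Phi$ applied to $\partial \mathcal{R}$, and attach it along the $\Phi(x)$-side of $\mathcal{C}$. The portion of $\partial\mathcal{R}$ other than $\gamma$ reads the same word as before (it was part of $\partial D$ or adjacent to other corridors), and its image under $\Phi$ now must be reduced and reconciled with the neighbouring labels; this is handled by Lemma \ref{path} (reducing a path without crossings), applied along the seams where $\Phi(\mathcal{R})$ meets the rest of the diagram.

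The key steps, in order, are: (1) identify $\gamma = \mathcal{C}\cap\mathcal{R}$ and check it is a connected path, as already noted after the definition of a room; (2) read off that the side of $\mathcal{C}$ opposite $\gamma$ carries the label $\Phi(x)$ (or $\Phi^{-1}(x)$), which is immediate from the defining relations $t^{-1}at=\varphi(a)$ of the $t$-cells along that stretch of corridor; (3) excise $\mathcal{R}$ and the corresponding sub-band of $t$-cells, leaving a planar hole bounded on one side by $\Phi(x)$; (4) insert $\Phi(\mathcal{R})$ into the hole, re-attaching the $t$-cells so that $\mathcal{C}$ is still a single corridor but now with $\mathcal{R}$ on its other side; (5) tidy up any unreduced seams using Lemma \ref{path}. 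Throughout, the outer boundary label of $D$ is untouched because every edge we move or relabel is interior, and the cut-and-paste is along paths, so planarity and simple-connectivity are preserved.

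For the cell count: the primitive cells of $\mathcal{R}$ are exactly the cells of the subdiagram we remove, and they are replaced by the primitive cells of $\Phi(\mathcal{R})$; no $t$-cells are created or destroyed since we merely slide the sub-band of $\mathcal{C}$ from one side of $\mathcal{R}$ to the other. By the discussion in Section \ref{induced}, if $\Phi$ is area-preserving then $\mathrm{Area}(\Phi(\mathcal{R})) = \mathrm{Area}(\mathcal{R})$ (and likewise for $\Phi^{-1}$, which is area-preserving whenever $\Phi$ is), so the number of primitive cells in $D$ is unchanged. I expect the main obstacle to be bookkeeping rather than conceptual: one must be careful that, after excising the sub-band of $t$-cells along $\gamma$ and regluing, the corridor $\mathcal{C}$ remains a genuine (embedded, non-self-touching) $t$-corridor and that the reductions from Lemma \ref{path} along the new seams really do apply — i.e. that the relevant paths have no crossings — which uses the fact that $\gamma$ is a single connected arc and that $\mathcal{R}$ met $\mathcal{C}$ only along $\gamma$. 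Handling the orientation of the $t$-cells (choosing $\Phi$ versus $\Phi^{-1}$) and the degenerate point-intersection case are the remaining details.
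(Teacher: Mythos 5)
Your overall strategy is the same as the paper's: excise the room, replace it by its image under $\Phi$ (or $\Phi^{-1}$, depending on the direction of the $t$-cells) on the opposite side of the corridor, rebuild the corridor over the word that is now exposed, and invoke area-preservation of $\Phi^{\pm 1}$ for the count of primitive cells. The paper does this slightly more globally (it removes the whole corridor-plus-room subdiagram, whose outer label is $y_1xy_2\,t\,z^{-1}t^{-1}$, builds a fresh corridor with bottom $y_1xy_2$ — the room's outer boundary spliced into the old bottom in place of the intersection path — and fills the region between its top and the old top $z$ with $\Phi(A)$), whereas you do the surgery locally in the hole left by the room and the sub-band over $\gamma$; that difference is cosmetic.

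There are, however, two concrete slips in your mechanism. First, you cannot literally ``re-attach'' or ``slide'' the excised sub-band of $t$-cells, and the claim that ``no $t$-cells are created or destroyed'' is false: the bottom edges of those $t$-cells spell the label of $\gamma$, but after the room is removed the path they would have to attach to is the other portion $w$ of $\partial\mathcal{R}$, which equals the label of $\gamma$ only as an element of $G$, not letter for letter. You must instead build a \emph{new} $t$-band with bottom $w$ and top the reduced form of $\Phi(w)$, and then glue $\Phi(\mathcal{R})$ between that top and the old $\Phi(x)$-path; in general this changes the number of $t$-cells (as it does in the paper's construction, where the corridor bottom changes from $y_1yy_2$ to $y_1xy_2$). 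This does not threaten the lemma, which only counts primitive cells, but as written the gluing would not match up. Second, the degenerate case where the corridor meets the room in a single point is not ``nothing to move'': a room can contain many primitive cells while touching the corridor only at a vertex, and the lemma (and its use in Proposition \ref{graph}) requires moving them; the paper covers this by allowing the intersection path $y$ to be a point, so that the new corridor bottom is $y_1xy_2$ with the room's boundary loop inserted at that vertex, and the rest of the argument goes through unchanged. With these two corrections (and the crossing/reducedness bookkeeping via Lemma \ref{path}, which you already flag, and which the paper handles by choosing the loop $xy^{-1}$ without crossings), your argument matches the paper's proof.
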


\begin{proof}

\begin{figure}
	\includegraphics[width=120mm]{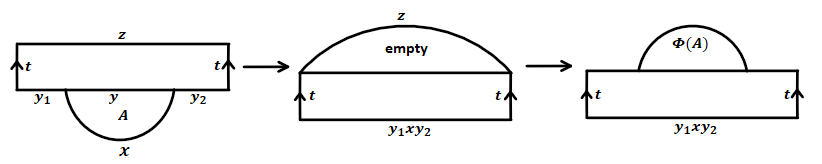}
	\caption{}
	\label{move1}
\end{figure}

\begin{figure}
	\includegraphics[width=120mm]{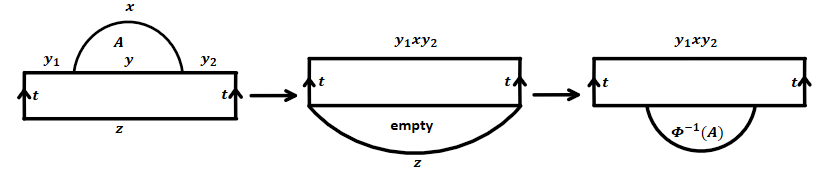}
	\caption{}
	\label{move2}
\end{figure}
In the case where the room $A$ intersects the corridor at a path $y$, and $A=xy^{-1}$ with $x$ being a segment of the boundary of $A$ that is not on the corridor, we can choose $x$ and $y$ such that the loop $xy^{-1}$ has no crossings. If $x$ is not a tight path, deleting all backtracks from $x$ forms a tight path. Denote this tight path by $x$ again. All cells in $A$ are between this tight path $x$ and $y$. Note that $x$ or $y$ may be a point. 

Consider the scenario where $y$ is at the bottom of the corridor. Assume the bottom of the corridor is $y_1yy_2$, and the top is denoted as $z$. We have $z=\Phi(y_1yy_2)=\Phi(y_1xy_2)$. Now, focus on the subdiagram that includes this corridor and the room $A$. By Lemma \ref{path}, we may assume the boundary of this subdiagram is already reduced, i.e., it consists of the reduced words $y_1xy_2$ and $z$.

To move $A$ to the other side of the corridor, we will remove other parts of the diagram, reconstruct this part, and then attach the other parts back in the same way.

First, construct a $t$-corridor with a bottom of $y_1xy_2$ and a top that is the reduced form of $\Phi(y_1xy_2)$. Next, attach a path $z$ to both ends of the top, forming a loop $\Phi(y_1xy_2)z^{-1}$ with its interior disjoint from the corridor. Note that $\Phi(y_1xy_2)z^{-1}=\Phi(xy^{-1})$, which means this loop corresponds to the image of the boundary of $A$.

Then, fold the two ends of $z$ together with the top of the corridor (if applicable) to create a new simplified loop. Fill this new loop with $\Phi(A)$. The resulting boundary is still $y_1xy_2tz^{-1}t^{-1}$, allowing us to attach the other parts of the diagram back.

The process is similar when $y$ is located at the top of the corridor. In this case, assume the top is $y_1yy_2$ and the bottom is $z$, giving $y_1yy_2=\Phi(z)$. To move $A$ to the other side, we need to construct a $t$-corridor with a bottom that is the reduced form of $\Phi^{-1}(y_1xy_2)$ and a top of $y_1xy_2$ (assuming it is already reduced). Attach a path $z$ to the bottom and fold both ends. Fill the resulting loop with $\Phi^{-1}(A)$.

These two cases are depicted in Figure \ref{move1} and Figure \ref{move2}, illustrating the steps involved in moving $A$ across the corridor.

By replacing $A$ with its image under $\Phi$ or $\Phi^{-1}$ on the opposite side of the corridor, we effectively move $A$ to the other side. Apart from $A$ and the corridor, the rest of the diagram remains unaffected. Therefore, if $\Phi$ is an area-preserving map, the total number of primitive cells in the diagram remains unchanged.
\end{proof}

\section{Operations on graphs of groups}\label{oper: graph of groups}
In this section, we introduce an operation on the graph of groups called sliding. This operation allows us to perform a simple form of general Dehn twists and then the operations discussed in the previous section could apply.

\subsection{Sliding of the graph} (See \cite{R})
Let $\Gamma$ be a graph of groups. From Bass-Serre theory, there exists a tree $T$ such that $\Gamma$ is the quotient of $G$ acting on $T$.
Consider a vertex $v_0$ with vertex group $G_0$ in $T$. For each edge $e_i$ adjacent to $v_0$, there is a corresponding edge element $u_i \in G_0$. If, for some $i$ and $j$, $u_i$ is a proper power of $u_j$, we have an operation on $\Gamma$ called sliding of $e_i$ along $e_j$.

Let $u_1=_{G_0}u_2^p$. Let the edge relation on $e_i$ be $w_i=u_i$, where $w_i\in G_i$, the vertex group of $v_i$, $i=1,2$. Then there are equations $w_1=_Gu_1=_{G_0}u_2^p=_{G}w_2^p$. 

\begin{figure}
	\includegraphics[width=100mm]{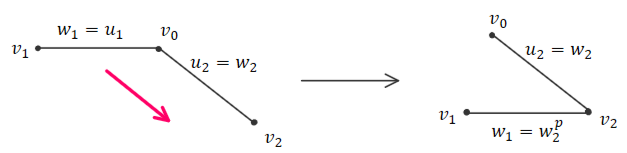}
	\caption{}
	\label{sliding}
\end{figure}

The sliding operation involves detaching $e_1$ from $v_0$ and then attaching it to $v_2$ at the same end. The new edge relation corresponding to this edge is $w_1 = w_2^{p}$. Figure \ref{sliding} illustrates this operation, which can be viewed as sliding the edge $e_1$ along $e_2$ to the other end of $e_2$, as indicated by the red arrow. 

Sliding on $T$ $G$-equivalently induces an operation on $\Gamma$, which we refer to as sliding. Denote the new graph of groups by $\Gamma'$. Note that $G(\Gamma')$ is generated by the same generators but changes the relation $w_1 = u_1$ to $w_1 = w_2^{p}$ from $G$. So it can be viewed as the same group under different presentations.

\begin{remark}\label{sliding1}
Consider only the subgraph shown in Figure \ref{sliding}. If $w_2$ is a proper power in $G_2$, say the root is $v\in G_2$, then the root of the edge element of $e_1$ is not in either of its two adjacent vertex groups but in $G_2$. This is fixed after the sliding, i.e., the root of each edge element is in one of its adjacent vertex groups. This is the type of graph of groups that we will investigate in the next section.
\end{remark}

In this section, we will prove the following lemma.

\begin{lemma}\label{D-sliding}
Let $D(\Gamma)$ denote the subgroup of $Out(G)$ generated by all Dehn twists in $\Gamma$. Then, for any $\varphi\in D(\Gamma)$, there exists $k$ such that $\varphi^k\in D(\Gamma')$.
\end{lemma}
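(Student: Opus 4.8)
The plan is to understand precisely how a sliding operation changes the Dehn twist subgroup, and then to pass to a finite-index subgroup. Sliding of $e_1$ along $e_2$ detaches $e_1$ from $v_0$ and reattaches it to $v_2$, replacing the edge relation $w_1 = u_1$ (with $u_1 = u_2^p$ in $G_0$) by $w_1 = w_2^p$. So the group $G$ is unchanged as an abstract group, but $\Gamma'$ has a genuinely different graph-of-groups structure, hence a different collection of basic Dehn twists. The key observation is that the basic Dehn twists of $\Gamma'$ can be expressed, up to bounded powers, in terms of those of $\Gamma$: the twist along the relocated edge $e_1$ in $\Gamma'$ is, roughly, conjugate to the twist along $e_1$ in $\Gamma$ composed with a power of the twist along $e_2$. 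I would make this explicit by writing down, for a representative element $g$ acted on by the basic Dehn twist $\varphi_1^{\Gamma'}$, what the automorphism does, and comparing it with $\varphi_1^{\Gamma}$ and $\varphi_2^{\Gamma}$.

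First I would reduce to the case of a single sliding move: since any element of $D(\Gamma)$ after finitely many slides lands in some $D(\Gamma^{(N)})$, and finite powers compose, it suffices to prove the statement for one sliding $\Gamma \rightsquigarrow \Gamma'$. Next I would set up notation as in the excerpt: $u_1 =_{G_0} u_2^p$, edge relations $w_i = u_i$ with $w_i \in G_i$. Then I would compute the effect of each basic Dehn twist of $\Gamma$ and of $\Gamma'$ on vertex groups and stable letters, using the explicit formulas in the definition of basic Dehn twist (the three cases: separating edge in $T$, non-separating edge in $T$, edge not in $T$). The crucial computation is: the basic Dehn twist of $\Gamma'$ associated to the moved edge $e_1$ differs from a product of basic Dehn twists of $\Gamma$ (in particular $\varphi_1^{\Gamma}$ and $\varphi_2^{\Gamma}$, with the latter raised to the power $p$) by an inner automorphism, since conjugation by $u_1$ and conjugation by $u_2^p$ agree in $G$. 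Because all Dehn twists commute up to inner automorphisms and form an abelian subgroup of $\mathrm{Out}(G)$ (as noted in Section~\ref{Dehn Twist}), in $\mathrm{Out}(G)$ we get an honest equality $\varphi_1^{\Gamma'} = \varphi_1^{\Gamma}\,(\varphi_2^{\Gamma})^{\pm p} \cdot (\text{possibly other twists fixed by the move})$, and all the other basic Dehn twists of $\Gamma$ and $\Gamma'$ are identified directly.

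From that identification I would argue as follows. Given $\varphi \in D(\Gamma)$, write it (in $\mathrm{Out}(G)$) as a product of powers of the basic Dehn twists of $\Gamma$. Replace each occurrence of a basic Dehn twist of $\Gamma$ that is \emph{not} a basic Dehn twist of $\Gamma'$ — essentially only $\varphi_1^{\Gamma}$, the twist along the edge that was moved — using the relation above: $\varphi_1^{\Gamma} = \varphi_1^{\Gamma'} (\varphi_2^{\Gamma'})^{\mp 1} \cdots$, but note $\varphi_2^{\Gamma'} = \varphi_2^{\Gamma}$ may not hold on the nose if $e_2$'s adjacency also changed; more robustly, raise $\varphi$ to the power $k = p$ (or a suitable multiple clearing all denominators coming from the $\mathbb{Z}$-linear relation between the twist lattices of $\Gamma$ and $\Gamma'$). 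Since $D(\Gamma)$ and $D(\Gamma')$ are both finitely generated abelian (they are subgroups of $\mathrm{Out}(G)$ generated by finitely many basic Dehn twists), and the above shows $p\cdot D(\Gamma) \subseteq D(\Gamma') \subseteq \tfrac1p D(\Gamma)$ inside $D(\Gamma)\otimes\mathbb{Q}$, there is a uniform $k$ with $\varphi^k \in D(\Gamma')$ for every $\varphi \in D(\Gamma)$; in fact $k = p$ works for one slide, and the product of the $p$'s over all slides works in general.

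The main obstacle I anticipate is the bookkeeping in case (2) of the basic Dehn twist definition: when the moved edge or the edge it slides along is in the maximal tree but non-separating, sliding can change which edges are in the tree and how the induced action on the remaining stable letters looks, so the comparison $\varphi_1^{\Gamma'}$ versus $\varphi_1^{\Gamma}$ picks up correction terms on stable letters, not just a conjugation. I would handle this by choosing the maximal trees of $\Gamma$ and $\Gamma'$ compatibly (so that $e_2 \in T$ and the slide does not force $e_1$ in or out of $T$ in a way that changes $T$ elsewhere), and then checking that every such correction term is itself a power of a basic Dehn twist of $\Gamma'$ already accounted for. A secondary subtlety is making sure the exponent $k$ is independent of $\varphi$: this follows once one knows the relation between the two twist lattices is given by a single integer matrix with determinant a divisor of $p$ (per slide), so I would state the lemma with $k$ depending only on $\Gamma$ and the sequence of slides, which is all that is needed downstream.
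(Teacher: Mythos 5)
Your overall strategy --- compare the basic Dehn twists of $\Gamma$ and $\Gamma'$ explicitly, use commutativity of Dehn twists in $\mathrm{Out}(G)$, and take $k=p$ for a single slide, composing over finitely many slides --- is the same as the paper's, but your central computation misidentifies which twist actually changes, and the lemma hinges on getting exactly that right. After sliding $e_1$ along $e_2$, the twist along the \emph{moved} edge $e_1$ does not change at all: its edge element is $w_1=u_1=u_2^p=w_2^p$ in $G$ both before and after the slide, and $e_1$ cuts off the same piece of the graph (the $v_1$-side) in $\Gamma$ and in $\Gamma'$, so $\varphi_1^{\Gamma'}=\varphi_1^{\Gamma}$ in $\mathrm{Out}(G)$; there is no relation of the form $\varphi_1^{\Gamma'}=\varphi_1^{\Gamma}(\varphi_2^{\Gamma})^{\pm p}$. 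What changes is the twist along $e_2$, the edge slid over, because $e_1$ and everything hanging off $v_1$ switch sides of $e_2$: in the paper's notation, with $T_0,T_1,T_2$ the components of the tree after removing $e_1,e_2$, the twist $\varphi_2^{\Gamma}$ conjugates $G(T_0)$ and $G(T_1)$ by $u_2$, whereas $\varphi_2^{\Gamma'}$ conjugates only $G(T_0)$. The correct identity, verified on vertex groups and then on the stable letters of edges outside the maximal tree, is $(\varphi_2^{\Gamma})^p=\varphi_1^{\Gamma'}(\varphi_2^{\Gamma'})^p$, and the $p$-th power is unavoidable: the conjugation missing on $G(T_1)$ is by $u_2^p=w_1$, which is only realized by the $e_1$-twist of $\Gamma'$, so no analogous relation holds for $\varphi_2^{\Gamma}$ itself.

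This error propagates into your concluding argument. Your replacement rule $\varphi_1^{\Gamma}=\varphi_1^{\Gamma'}(\varphi_2^{\Gamma'})^{\mp 1}\cdots$ would give $k=1$, contradicting your own (correct) expectation that $k=p$ is needed; and the inclusion you extract from it, $D(\Gamma')\subseteq \tfrac1p D(\Gamma)$, points the wrong way for the lemma --- what is required is that $p$-th powers of $\Gamma$-twists lie in $D(\Gamma')$, which in your write-up is asserted rather than derived from a correct relation between the two families of basic twists. Once you replace your relation by $\varphi_i^{\Gamma}=\varphi_i^{\Gamma'}$ for $i\neq 2$ together with $(\varphi_2^{\Gamma})^p=\varphi_1^{\Gamma'}(\varphi_2^{\Gamma'})^p$, the remainder of your plan (reduction to basic twists by commutativity, a uniform exponent per slide, composition over the slides, and the bookkeeping for the non-separating case on stable letters) goes through exactly as in the paper.
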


\begin{proof}
Because of the commutativity of $D(\Gamma)$, it suffices to consider basic Dehn twists.

Let $\varphi_i$ and $\varphi'_i$ be basic Dehn twists corresponding to $e_i$ in $\Gamma$ and $\Gamma'$ respectively. If $i\neq 2$, they represent the same automorphism of $G$. So we only need to find $k$ such that $\varphi^k_2\in D(\Gamma')$.

We only cover the case when $e_1$ and $e_2$ is in the maximal tree of $\Gamma$, and the maximal tree of $\Gamma'$ is obtained by replacing $e_2$ to the new edge. The other case is easier and follows the same way.

When restricted to the maximal tree, removing $e_1$ and $e_2$ will result in three components. Denote them by $T_0$, $T_1$ and $T_2$ containing $v_0$, $v_1$ and $v_2$ respectively.
Let the direction of $e_2$ be from $v_0$ to $v_2$, then $\varphi_2$ is the map that restricts to a conjugation by $u_2$ on $G(T_0)$ and $G(T_1)$, and an identity on $G(T_2)$. On the other hand, $\varphi'_2$ is the map that restricts to a conjugation by $u_2$ on $G(T_0)$, and an identity on $G(T_1)$ and $G(T_2)$. Thus, they act differently on $G(T_1)$.

Now in $\Gamma'$, let the direction of $e_1$ be from $v_1$ to $v_2$, then $\varphi'_1$ is the map that restricts to a conjugation by $w_1$ on $G(T_1)$, and an identity on $G(T_0)$ and $G(T_2)$. Therefore, $\varphi_2^p$ is exactly the same isomorphism as $\varphi'_1(\varphi'_2)^p$ restricted on $G(T)\cong G(T')$. 

These two maps also induce the same map on stable letters corresponding to edges not on the maximal tree: if $e_i$ does not connect two different components, it is certainly true; if $e_i$ connects $T_0$ to $T_1$, they both act as conjugation by $u_1$ on $t_i$; If $e_i$ connects $T_0$ to $T_2$, they both map $t_i$ to $u_1^{-1}t_i$; if $e_i$ connects $v_1$ to $v_2$, they both map $t_i$ to $u_1^{-1}t_i$ as well.

As a result, $\varphi_2^p=\varphi'_1(\varphi'_2)^p\in D(\Gamma')$.

\end{proof}

\subsection{Full-rooted Graph of Groups}
For each edge $e$ in a graph of groups $\Gamma$ with cyclic edge groups, there are associated edge elements $u\in G_{i(e)}$ and $w\in G_{t(e)}$. If the root of $u$ in $G_{i(e)}$ is also a root in $G$, or the root of $w$ in $G_{t(e)}$ is also a root in $G$, we call $e$ \emph{a rooted edge}. If all edges in $\Gamma$ are rooted edges, we call $\Gamma$ \emph{a full-rooted graph of groups.}

From Remark \ref{sliding1}, we can convert an edge to a rooted edge by sliding. The next lemma shows how to do it to all edges, thus converting it to a full-rooted graph of groups. 

\begin{lemma}\label{full-rooted}
Let $\Gamma$ be a graph of groups with cyclic edge groups whose fundamental group is a torsion-free hyperbolic group $G$. Then there exists a sequence of slidings converting it to a full-rooted graph of groups $\Gamma'$.   
\end{lemma}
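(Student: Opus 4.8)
The plan is to induct on a complexity measuring how far $\Gamma$ is from being full-rooted, and to use the Bass--Serre tree to locate, for any non-rooted edge, a finite sequence of slidings that makes it rooted while leaving all other edges untouched. Since $G$ is torsion-free hyperbolic, every nontrivial $g\in G$ has a unique maximal root, so for a subgroup $H\le G$ and $g\in H$ we may set $\rho_H(g)=\max\{n:\ g=h^n\text{ for some }h\in H\}$; then $\rho_H(g)\le\rho_G(g)$ always, and $\rho_G$ is a conjugacy invariant. An edge $e$ with edge elements $u\in G_{i(e)}$, $w\in G_{t(e)}$ satisfies $\rho_G(u)=\rho_G(w)=:\rho_G(e)$ (as $u,w$ are conjugate in $G$), and by definition $e$ is rooted iff $\rho_{G_{i(e)}}(u)=\rho_G(e)$ or $\rho_{G_{t(e)}}(w)=\rho_G(e)$. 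Define the defect $\mathrm{def}(e)=\rho_G(e)-\max\bigl(\rho_{G_{i(e)}}(u),\rho_{G_{t(e)}}(w)\bigr)\ge 0$, so $e$ is rooted iff $\mathrm{def}(e)=0$ and $\Gamma$ is full-rooted iff $N(\Gamma):=\sum_e\mathrm{def}(e)=0$. We induct on $N(\Gamma)$.

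Assume $N(\Gamma)>0$ and fix a non-rooted edge $e$. Pass to the Bass--Serre tree $T$, lift $e$ to an edge $\tilde e$ whose stabilizer $G_{\tilde e}=\langle s\rangle$ is infinite cyclic, and write $s=z^d$ with $z=\sqrt{s}$ and $d=\rho_G(e)$. The element $z$ is elliptic in $T$: if it were hyperbolic, $z^d=s$ would be hyperbolic, contradicting that $s$ fixes $\tilde e$. Let $\Phi=\mathrm{Fix}(z)\subseteq T$, a nonempty subtree. Because $e$ is non-rooted, neither endpoint of $\tilde e$ lies in $\Phi$ (if $i(\tilde e)\in\Phi$ then $z\in G_{i(\tilde e)}$, forcing $\rho_{G_{i(e)}}(u)\ge d$; similarly for $t(\tilde e)$). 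Let $v_0$ be the endpoint of $\tilde e$ nearest to $\Phi$, let $v^\ast\in\Phi$ be the point of $\Phi$ closest to $v_0$, and let $\gamma=[v_0,v^\ast]$; then $\tilde e\notin\gamma$. Writing $a=\rho_{G_{v_0}}(s)<d$, the root $z^{d/a}$ lies in $G_{v_0}$ and fixes $v^\ast$, hence fixes all of $\gamma$, and therefore so does $s=(z^{d/a})^{a}$. Consequently every edge $\tilde f$ of $\gamma$ has $s\in G_{\tilde f}$, while $z\notin G_{\tilde f}$ (as $\tilde f$ has an endpoint outside $\Phi$); since $G_{\tilde f}$ is cyclic this gives $G_{\tilde f}=\langle z^{j}\rangle$ with $2\le j\mid d$.

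Let $\tilde f_1$ be the first edge of $\gamma$, $G_{\tilde f_1}=\langle z^{j}\rangle$, and let $f$ be its image in $\Gamma$. Then the edge elements of $e$ and $f$ at $v_0$ satisfy $u=u_f^{\,d/j}$ in $G_{v_0}$, so when $j<d$ we may slide $e$ along $f$; this reattaches $\tilde e$ to the next vertex of $\gamma$, replaces $G_{\tilde e}$ by $\langle (z^{j})^{d/j}\rangle=\langle z^{d}\rangle=\langle s\rangle$ again, and alters no other edge of $\Gamma$ (cf. Remark~\ref{sliding1}, which is precisely the one-step version). In the degenerate case $j=d$, where $\tilde f_1$ carries the same edge group $\langle s\rangle$ as $\tilde e$, one uses instead the corresponding trivial-power sliding, which is again a legitimate Bass--Serre move; see \cite{R}. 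Performing these slidings in succession along the finite path $\gamma$ (equivalently, reattaching the $\tilde e$-orbit step by step in $T$) moves $v_0$ to $v^\ast\in\Phi$. At that stage $z\in G_{v^\ast}$ and the edge group is still $\langle s\rangle=\langle z^d\rangle$, so the repositioned edge is rooted, i.e. its defect has dropped to $0$; since every other edge is unchanged, $N$ strictly decreased. By induction $\Gamma$ can be converted to a full-rooted graph of groups $\Gamma'$.

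\textbf{Expected main obstacle.} The real content is the existence step: guaranteeing that a non-rooted edge can actually be pushed toward rootedness by \emph{slidings}. This is exactly what the Bass--Serre picture supplies — ellipticity of $z$ together with uniqueness of roots in the torsion-free hyperbolic group $G$ forces every edge along the geodesic $\gamma$ from $\tilde e$ to $\mathrm{Fix}(z)$ to carry a root of $s$, so each step of $\gamma$ is available to slide along. The remaining care is bookkeeping: checking that after each slide the edge element is again exactly $s$, that the moved endpoint's vertex group has acquired a strictly (or at least weakly) deeper root of $s$, and that no other edge's defect is affected — and handling the trivial-power degenerate slide so that it is seen to be a valid operation preserving $G$.
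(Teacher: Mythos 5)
Your proposal is correct and follows essentially the same route as the paper: pass to the Bass--Serre tree, note that the root $z$ of the edge element is elliptic, and slide the non-rooted edge along the geodesic toward $\mathrm{Fix}(z)$, the key point being that $s=z^d$ fixes that whole geodesic so every edge on it has edge group $\langle z^j\rangle$ and the slides are available (a point you verify more explicitly than the paper, which slides at the root end of maximal paths and leaves this implicit). The only caveat is your degenerate case $j=d$, where the exponent is $1$ and the move falls outside the paper's literal definition of sliding (which requires a \emph{proper} power), though it is a standard slide move and harmless for the later use in Lemma~\ref{D-sliding}.
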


\begin{proof}
From Bass-Serre theory, there exists a tree $T$ such that $\Gamma$ is the quotient of $G$ acting on $T$.

An edge in $T$ is called rooted if it projects to a rooted edge in $\Gamma$. If all edges in $\Gamma$ are rooted, we are done. Otherwise, consider all edges that are not rooted in $T$. Let $e$ be a non-rooted edge in $T$ and let the edge elements be $w \in G_i(e)$ and $u \in G_{t(e)}$. Let $v$ be their root. Then $v$ acts on $T$ by an elliptic isometry. Hence it has a fixed point. Denote the fixed point set of $v$ as $\Omega_v$, and the geodesic connecting $e$ to $\Omega_v$ (including $e$) as $\phi(e)$. It is clear by construction that $\phi(ge) = g\phi(e)$ for all $g \in G$.

The path $\phi(e)$ is called maximal if it is not a proper subset of any other $\phi(e')$. The union of maximal paths in $\{\phi(e) \mid e \text{ is a non-rooted edge}\}$ is $G$-invariant and projects to finitely many paths in $\Gamma$.

Orient each maximal path so that it starts at a root. The slide along the first two edges in each maximal path projects to slides on $\Gamma$. Since it reduces the length of each maximal path, after finitely many steps, the resulting graph $T'$ is full-rooted. Therefore, $\Gamma' = T'/G$ is also a full-rooted graph of groups.
\end{proof}

\subsection{Lifting of the Dehn twist}

Let $\Gamma$ be a full-rooted graph of groups with cyclic edge groups. Fix a base point $v_0$ and the maximal tree $T$, we can label vertices in $T$ $v_0,v_1,\dots,v_m$ and all edges in $T$ $e_1,\dots,e_m$, ordered in such a way that we can construct $T$ by starting at $T_0=v_0$ and at $i$-th time, add new vertex $v_i$ together with edge $e_i$ connecting $v_i$ to $T_{i-1}$ to form a new tree $T_i$. We can also orient edges in $T$ so that the initial vertex of $e_i$ is $v_i$. Label the rest edges of $\Gamma$ by $e_{m+1},\cdot\cdot\cdot, e_n$.

For each $e_i$, let $v_i$ be the root of the edge element, which is in one of the adjacent vertex groups by our assumption. Then we can choose $u_i=v_i^{p_i}$ to be the edge element corresponding to this vertex, with $w_i$ to be the edge element corresponding to the other vertex. Note that $u_i$ may be in the initial or terminal vertex, but we assume to  always use $u_i=v_i^{p_i}$ to do conjugation or multiplication in the definition of the Dehn twist $\varphi_i$.

For an edge $e_i$ that is not in the maximal tree, let the direction be from the vertex containing $u_i$ to the vertex containing $w_i$. Thus, the basic Dehn twist $\varphi_i$ maps $t_i$ to $u_it_i$ and fixes other generators.

\begin{definition}
Let $(\Gamma, T)$ be labeled and oriented in the above sense. Then a Dehn twist of the form $\varphi=\varphi_n^{r_n}\dots\varphi_1^{r_1}$ is called a \emph{standard form}.
\end{definition}

Note that any Dehn twist on $\Gamma$ can be conjugated to a standard form. Since the mapping tori are isomorphic under conjugated automorphisms, it is enough to consider Dehn twists in the standard form.

\begin{lemma}
Let $\varphi=\varphi_n^{r_n}\dots\varphi_1^{r_1}$ be a Dehn twist in standard form. Then it can be lifted to a map $\bar{\varphi}: F(\Gamma, v_0)\rightarrow F(\Gamma, v_0)$ so that the diagram commute, in terms of words up to free reductions, for any power of $\varphi$ or $\varphi^{-1}$:
\[ \begin{tikzcd}
F(\Gamma,v_0) \arrow{r}{\bar{\varphi}^l} \arrow[swap]{d}{p} & F(\Gamma,v_0)\arrow{d}{p} \\%
\pi_1(\Gamma, T) \arrow{r}{\varphi^l}& \pi_1(\Gamma, T)
\end{tikzcd}
\]  
Furthermore, any word $x\in \pi_1(\Gamma, T)$ can be lifted to a word $L(x)$ in $F(\Gamma, v_0)$ so that $\varphi^l(x)=p\bar{\varphi}^l(L(x))$.
\end{lemma}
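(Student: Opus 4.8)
The plan is to build $\bar\varphi$ by lifting each basic Dehn twist separately and then composing in the standard order.

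For every vertex $v$ of $T$ let $\gamma_v$ be the reduced word in the tree stable letters $t_1,\dots,t_m$ spelling the geodesic of $T$ from $v_0$ to $v$ (so $\gamma_{v_0}$ is empty). For a generator $x$ of a vertex group $G_v$ set $\widehat x:=\gamma_v x\gamma_v^{-1}$, and for a non-tree edge $e_j$ set $\ell_j:=\gamma_{i(e_j)}\,t_j\,\gamma_{t(e_j)}^{-1}$; these are loops at $v_0$, they form a free basis of $F(\Gamma,v_0)$, and in fact $F(\Gamma,v_0)$ is exactly the set of words spelling a closed walk in $\Gamma$ based at $v_0$ — one tracks the current vertex, a generator of $G_v$ being legal only at $v$ and $t_j^{\pm1}$ moving along $e_j$. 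Consequently $F(\Gamma,v_0)$ is closed under free reduction and under products, and the map $p$, which simply erases the tree stable letters, sends a word spelling a closed walk to another such word representing the same element of $G=\pi_1(\Gamma,T)$; in particular $p(\widehat x)=x$ and $p(\ell_j)=t_j$, up to free reduction.

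First I would fix, for each $i$, a conjugator $\widehat u_i\in F(\Gamma,v_0)$ with $p(\widehat u_i)=u_i$ up to free reduction. Here the full-rooted hypothesis enters: $u_i=v_i^{p_i}$ with $v_i$ a power of a generator of an honest vertex group $G_{a_i}$, $a_i$ an endpoint of $e_i$, so $\widehat u_i:=\gamma_{a_i}\,v_i^{p_i}\,\gamma_{a_i}^{-1}$ is itself a loop at $v_0$. Using $\widehat u_i$ as conjugator I then define $\bar\varphi_i$ on the basis $\{\widehat x\}\cup\{\ell_j\}$ by transcribing the three cases in the definition of $\varphi_i$: conjugate by $\widehat u_i$ precisely those $\widehat x$ and $\ell_j$ belonging to the component of $\Gamma$ (or of $T$) on which $\varphi_i$ acts as conjugation by $u_i$, leave the remaining generators fixed, and — when $e_i\in T$ does not separate $\Gamma$ — replace $\ell_j$ by $\widehat u_i^{-1}\ell_j$ or $\ell_j\widehat u_i$ for each non-tree $e_j$ crossing $e_i$ according to its direction (and when $e_i\notin T$, replace $\ell_i$ by $\widehat u_i\ell_i$ and fix the rest). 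Each image is visibly a product of basis loops, so $\bar\varphi_i$ extends to an endomorphism of $F(\Gamma,v_0)$; carrying out the same recipe for $\varphi_i^{-1}$ produces a two-sided inverse, so $\bar\varphi_i\in\mathrm{Aut}(F(\Gamma,v_0))$. Because $p(\widehat u_i)=u_i$ and $p$ merely deletes tree stable letters, a short case check gives $p\circ\bar\varphi_i=\varphi_i\circ p$ as maps on words, up to free reduction.

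Then I set $\bar\varphi:=\bar\varphi_n^{\,r_n}\circ\cdots\circ\bar\varphi_1^{\,r_1}\in\mathrm{Aut}(F(\Gamma,v_0))$; the standard form fixes this ordering, so no commutation of the $\bar\varphi_i$ is needed. Since a substitution homomorphism of a free group commutes with free reduction, the identities $p\circ\bar\varphi_i=\varphi_i\circ p$ (up to free reduction) compose to $p\circ\bar\varphi=\varphi\circ p$, and since $\bar\varphi^{-1}=\bar\varphi_1^{\,-r_1}\circ\cdots\circ\bar\varphi_n^{\,-r_n}$ is again a lift of this kind, iterating yields $p\circ\bar\varphi^{\,l}=\varphi^{\,l}\circ p$ up to free reduction for every $l\in\mathbb Z$, which is the commuting square. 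For the final assertion, given a word $x$ in the generators of $\pi_1(\Gamma,T)$ let $L(x)$ be obtained by replacing each generator by its canonical lift $\widehat x$ or $\ell_j$; then $L(x)\in F(\Gamma,v_0)$, $p(L(x))=x$ up to free reduction, and hence $\varphi^{\,l}(x)=\varphi^{\,l}(p(L(x)))=p(\bar\varphi^{\,l}(L(x)))$.

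The step I expect to be the main obstacle is the definition and verification of the individual $\bar\varphi_i$ in the non-separating tree-edge case, where $\varphi_i$ also twists the non-tree stable letters: one must check that inserting $\widehat u_i^{\pm1}$ into the crossing loops $\ell_j$ genuinely lifts the action of $\varphi_i$ on those generators, that the resulting map still sends every closed walk to a closed walk (so $F(\Gamma,v_0)$ is preserved) and is invertible, and that erasing the tree stable letters recovers $\varphi_i$ exactly, up to free reduction. The feature that makes everything go through is that each conjugator $u_i=v_i^{p_i}$ sits inside an actual vertex group, so it has the canonical loop lift $\widehat u_i$; none of the difficulty touches the relations of $G$, which is exactly why the square can only be asked to commute up to free reduction.
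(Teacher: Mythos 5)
Your proposal is essentially correct, but it takes a genuinely different route from the paper's. You lift each basic twist $\varphi_i$ separately to an automorphism $\bar\varphi_i$ of $F(\Gamma,v_0)$, defined by its values on the loop generators $\{\gamma_v x\gamma_v^{-1}\}\cup\{\ell_j\}$, check $p\circ\bar\varphi_i=\varphi_i\circ p$ edge by edge, and then obtain the commuting square for all powers and inverses purely formally, since every map involved is a homomorphism. The paper instead defines one lift by the single substitution $t_j\mapsto u_j^{r_j}t_j$ (fixing all vertex letters) and proves commutativity by an explicit inductive computation of $\varphi^l$ on vertex elements and on non-tree stable letters, using the ordering built into the standard form (namely $\varphi_i(u_j)=u_j$ for $i\geq j$), ending with closed formulas such as $\varphi^l(x)=u_{l_1}^{-lr_{l_1}}\dots u_{l_k}^{-lr_{l_k}}xu_{l_k}^{lr_{l_k}}\dots u_{l_1}^{lr_{l_1}}$. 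Your argument is cleaner and treats negative powers more transparently than the paper's one-line remark; its cost is twofold. First, you need, and assert without proof, that the loops $\gamma_v x\gamma_v^{-1}$ and $\ell_j$ form a free basis of $F(\Gamma,v_0)$, since you define each $\bar\varphi_i$ by its values on them and extend; the paper sidesteps this by defining $\bar\varphi$ as a letter-by-letter substitution in the ambient free group (so its lift is well defined on words, not just up to a choice of basis decomposition). Second, and more consequentially, the paper's explicit formulas are exactly what Proposition \ref{graph} consumes: there one needs that $p\bar\varphi^{l}(L(x))$ differs from the lift of $x$ only by inserting a bounded number (independent of $l$) of blocks, each a single power of some root $v_j$, so that the relative length with respect to $\mathscr{H}$ stays linear in $|z|$. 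Your composed lift is a different map whose iterates reach that syllable form only after telescoping and free reduction, so adopting your proof of the lemma would require adding that observation downstream (or simply reverting to the paper's explicit $\bar\varphi$ for the application). Finally, the ``short case check'' you defer in the non-separating tree-edge case does demand care with the paper's conventions (left versus right multiplication, and whether $u_i$ lies in the initial or terminal vertex group), but this is the same bookkeeping the paper's computation carries out, not a genuine gap.
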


\begin{proof}
For a word $y=y_1\dots y_l\in F(\Gamma, v_0)$, where each $y_i$ is either an element in some vertex group or a stable letter, we define its image $\bar{\varphi}(y)$ as follows: if $y_i$ is a stable letter $t_j$, we replace it by $u_j^{r_j}t_j$; Otherwise, we keep it unchanged. Therefore, $\bar{\varphi}^l$ maps $t_j$ to $u_j^{lr_j}t_j$ and fixes all non-stable letters

We aim to use $\bar{\varphi}^l$ to analyze the image of $\varphi^l$. Hence we first consider how to lift a word $x\in \pi_1(\Gamma, T)$ to a word $L(x)$ in $F(\Gamma, v_0)$.

If $x$ is an element in some vertex group $G_v$, let $e=e_{l_k}\dots e_{l_1}$ be the unique geodesic in $T$ that connects $v$ to $v_0$. Then $L(x)=t^{-1}_{l_1}\dots t^{-1}_{l_k}xt_{l_k}\dots t_{l_1}$ corresponds to the lift of $x$. 

If $x=t_j$ is the stable letter corresponding to $e_j$ not in $T$, let $e_{j_k}\dots e_{j_1}$ be the geodesic in $T$ connecting the initial point of $e_j$ to $v_0$, and $e_{l_t}\dots e_{l_1}$ be the geodesic in $T$ connecting the terminal point of $e_j$ to $v_0$. Then $L(t_j)=t^{-1}_{j_1}\dots t^{-1}_{j_k}t_jt_{l_t}\dots t_{l_1}$ corresponds to the lift of $x$. 

In the above two cases, the lift is called a \emph{basic loop}. In general, if $x=x_1\dots x_n$ where each $x_i$ is either an element in a vertex group or a stable letter, we define $L(x)=L(x_1)\dots L(x_n)$ as the concatenation of basic loops after free reductions. Note that any elements in $F(\Gamma,v_0)$ can be expressed as a product of basic loops, it suffices to consider the commutativity on basic loops. 

Note that by our choice of order, $\varphi_i(u_j)=u_j$ if $i\geq j$.

Let $L(x)=t^{-1}_{l_1}\dots t^{-1}_{l_k}xt_{l_k}\dots t_{l_1}$ be a basic loop corresponding to $x\in G_{v_j}$. By definition, we have:
$$p\circ \bar{\varphi}^l(L(x))=u^{-lr_{l_1}}_{l_1}\dots u^{-lr_{l_k}}_{l_k}xu^{lr_k}_{l_k}\dots u^{lr_{l_1}}_{l_1}.$$

Now let us consider the image of $x$ under $\varphi$. For $i>m$, $\varphi_i$ fixes $x$. For $i\leq m$, $\varphi_i$ does not fix $x$ if and only of $e_i$ is on the geodesic that connects $v_j$ to $v_0$. In such a case, $\varphi_i$ acts on $x$ as a conjugation by $u_i$. Therefore, we have:
$$\varphi(x)=\varphi_{l_k}^{r_{l_k}}\dots\varphi_{l_1}^{r_{l_1}}(x)=u^{-r_{l_1}}_{l_1}\dots u^{-r_{l_k}}_{l_k}xu^{r_k}_{l_k}\dots u^{r_{l_1}}_{l_1}.$$

In the above expression of $\varphi(x)$, we treat the power of some $u_k$ as one term, together with the term $x$. If $e_i$ is not an edge on the geodesic that connects $v_j$ to $v_0$, then $\varphi_i$ fixes all terms. In other cases, $\varphi_i$ acts as a conjugation by $u_i$ on terms with subscripts greater than $i$, as well as on $x$. Therefore,
\begin{equation}
\begin{split}
\varphi^2(x)&=\varphi(u^{-r_{l_1}}_{l_1}\dots u^{-r_{l_k}}_{l_k}xu^{r_k}_{l_k}\dots u^{r_{l_1}}_{l_1})\\
&=\varphi_{l_k}^{r_{l_k}}\dots\varphi_{l_1}^{r_{l_1}}(u^{-r_{l_1}}_{l_1}\dots u^{-r_{l_k}}_{l_k}xu^{r_k}_{l_k}\dots u^{r_{l_1}}_{l_1})\\
&=\varphi_{l_k}^{r_{l_k}}\dots\varphi_{l_2}^{r_{l_2}}(u^{-2r_{l_1}}_{l_1}u^{-r_{l_2}}_{l_2}\dots u^{-r_{l_k}}_{l_k}xu^{r_k}_{l_k}\dots u^{r_{l_2}}_{l_2}u^{2r_{l_1}}_{l_1})\\
&=\ldots\\
&=u^{-2r_{l_1}}_{l_1}\dots u^{-2r_{l_k}}_{l_k}xu^{2r_k}_{l_k}\dots u^{2r_{l_1}}_{l_1}
\end{split}
\end{equation}

We continue this process, and finally, we obtain:
$$\varphi^l(x)=(\varphi_{l_k}^{r_{l_k}}\dots\varphi_{l_1}^{r_{l_1}})^l(x)=u^{-lr_{l_1}}_{l_1}\dots u^{-lr_{l_k}}_{l_k}xu^{lr_k}_{l_k}\dots u^{lr_{l_1}}_{l_1}.$$

Now let us consider the basic loop $L(t_j)=t^{-1}_{j_1}\dots t^{-1}_{j_k}t_jt_{l_t}\dots t_{l_1}$ corresponding to $t_j$. By definition, we have
$$p\circ \bar{\varphi}^l(L(x))=u^{-lr_{j_1}}_{j_1}\dots u^{-lr_{j_k}}_{j_k}u^{lr_j}_jt_ju^{lr_{l_t}}_{l_t}\dots u^{lr_{j_1}}_{j_1}.$$

For the image of $t_j$ under $\varphi$, if $e_i$ is not on either of the geodesics connecting two ends of $e_j$ to $v_0$, then $\varphi_i$ fixes $t_j$. For two geodesics connecting two ends of $e_j$ to $v_0$, there exists $h\geq0$ such that exactly the last $h$ edges are the same. That is to say, $j_i=l_i$ for $i\leq h$. Then for $i>h$, $\varphi_{j_i}(t_j)=u^{-1}_{j_i}t_j$ and $\varphi_{l_i}(t_j)=t_ju_{l_i}$; for $i\leq h$, $\varphi_{j_i}(t_j)=u^{-1}_{j_i}t_ju_{j_i}$.
Therefore, we have:
\begin{equation}
\begin{split}
\varphi(t_j)&=u^{-r_{j_1}}_{j_1}\dots u^{-r_{j_k}}_{j_k}u^{r_j}_jt_ju^{r_{l_t}}_{l_t}\dots u^{r_{l_{h+1}}}_{l_{h+1}}u^{r_{j_h}}_{j_h}\dots u^{r_{j_1}}_{j_1}\\
&=u^{-r_{j_1}}_{j_1}\dots u^{-r_{j_k}}_{j_k}u^{r_j}_jt_ju^{r_{l_t}}_{l_t}\dots u^{r_{l_1}}_{l_1}.
\end{split}
\end{equation}

Again, we treat the power of some $u_k$ as one term, together with the term $t_j$ in the above expression. If $e_i$ is not an edge on the geodesics that connects two ends of $e_j$ to $v_0$, then $\varphi_i$ fixes all terms. For $i\leq h$, $\varphi_{j_i}$ acts as a conjugation by $u_{j_i}$ on terms with subscripts greater than $j_i$, as well as on $t_j$. 

For $i>h$, $\varphi_{j_i}$ acts on $u^{-r_{j_{i+1}}}_{j_{i+1}}\dots u^{-r_{j_k}}_{j_k}u^{r_j}_j$ as conjugation by $u_{j_i}$, and acts on $t_j$ as a left multiply by $u^{-1}_{j_i}$. So as a result, $\varphi_{j_i}$ acts on the subword $u^{-r_{j_{i+1}}}_{j_{i+1}}\dots u^{-r_{j_k}}_{j_k}u^{r_j}_jt_j$  as a left multiply by $u^{-1}_{j_i}$ and fixes all other terms. Similarly, $\varphi_{l_i}$ acts on the subword $t_ju^{r_{l_t}}_{l_t}\dots u^{r_{l_{i+1}}}_{l_{i+1}}$ as a right multiply by $u_{l_i}$ and fixes all other terms.

By induction, we can derive:
$$\varphi^l(t_j)=u^{-lr_{j_1}}_{j_1}\dots u^{-lr_{j_k}}_{j_k}u^{lr_j}_jt_ju^{lr_{l_t}}_{l_t}\dots u^{lr_{l_1}}_{l_1}$$

In general, if $x=x_1\dots x_n$ where each $x_i$ is either an element in a vertex group or a stable letter,
\begin{equation}
\begin{split}
p\circ \bar{\varphi}^l(L(x))&=p\circ \bar{\varphi}^l(L(x_1)\dots L(x_n))\\
&=p\circ \bar{\varphi}^l(L(x_1))\dots p\circ \bar{\varphi}^l(L(x_n))\\
&=\varphi^l(x_1)\dots\varphi^l(x_n)\\
&=\varphi^l(x).
\end{split}
\end{equation}

Finally, $\varphi^{-1}$ can be obtained by replace $u_i$ by $u_i^{-1}$ in the definition of the map. So if $l$ is negative, it can be proved similarly by replacing $u_i$ with $u_i^{-1}$.
\end{proof}

\section{Proof of the Main Theorem}\label{proof}
In this section, we present the proof of Theorem~\ref{main}. We commence with the case of Dehn twists and establish the general case by reducing it to Dehn twists.
\subsection{Dehn twists}

\begin{lemma}\label{power}
If $u^m=w^n$ in a torsion-free hyperbolic group $G$, then there exists $v\in G$ such that both $u$ and $w$ are powers of $v$. 
\end{lemma}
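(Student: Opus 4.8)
The plan is to deduce the statement from the standard fact that, in a hyperbolic group, the centralizer of an infinite-order element is virtually cyclic. First I would dispose of the degenerate cases and normalize the exponents: if $u=1$ then $w^n=1$, so by torsion-freeness either the hypothesis is vacuous or $w=1$, and we may take $v=1$; symmetrically if $w=1$. Replacing $u$ by $u^{-1}$ and/or $w$ by $w^{-1}$ if needed, we may assume $m,n\geq 1$. So from now on $u,w$ are nontrivial, hence of infinite order since $G$ is torsion-free, and therefore $z:=u^m=w^n$ also has infinite order. The key elementary observation is that $u$ commutes with $z$, because $uzu^{-1}=u\,u^m\,u^{-1}=u^m=z$, and likewise $w$ commutes with $z$; thus both $u$ and $w$ lie in the centralizer $C_G(z)$.

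Next I would invoke the structural result (see \cite{gromov1987hyperbolic}, or the description of the unique maximal virtually cyclic subgroup containing $z$) that $C_G(z)$ is virtually cyclic. Since $G$ is torsion-free, $C_G(z)$ is a torsion-free group containing the infinite cyclic group $\langle z\rangle$ as a finite-index subgroup, and any such group is itself infinite cyclic. Writing $C_G(z)=\langle v\rangle$, we get $u=v^a$ and $w=v^b$ for some integers $a,b$, which is precisely the claim.

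The main obstacle is not a computation but making the cited step self-contained, namely justifying that $C_G(z)$ is virtually cyclic. If one prefers to avoid quoting it, I would instead argue geometrically: $z$ acts on the Cayley graph of $G$ as a loxodromic isometry admitting a quasi-axis, and its attracting/repelling pair $\{z^{+\infty},z^{-\infty}\}\subset\partial G$ is preserved setwise by every element of $C_G(z)$; a standard quasi-geodesic/ping-pong argument then shows $C_G(z)$ is quasi-isometrically embedded and has exactly two ends, hence is virtually $\mathbb{Z}$, after which torsion-freeness finishes the argument as above. Either route reduces the lemma to the single fact that $C_G(z)\cong\mathbb{Z}$, from which the conclusion is immediate.
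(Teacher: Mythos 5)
Your proposal is correct and follows essentially the same route as the paper: both reduce the statement to the fact that the centralizer of the infinite-order element $z=u^m=w^n$ in a hyperbolic group is virtually cyclic (the paper cites Bridson--Haefliger for this), combined with the observation that a torsion-free virtually $\mathbb{Z}$ group is infinite cyclic, so $u,w\in C_G(z)=\langle v\rangle$. Your write-up simply makes explicit the steps the paper leaves implicit (that $u$ and $w$ commute with $z$, and the degenerate cases), so no further comparison is needed.
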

\begin{proof}
    This follows from the following two facts:
   
    (1) According to \cite[page 462-463]{MA}, in word-hyperbolic groups, centralizers of elements of infinite order are virtually $\mathbb Z$. 
    
    (2)  A torsion-free group that is virtually $\mathbb Z$ is infinite cyclic.
\end{proof}

\begin{proposition}\label{graph}
Let $G$ be a torsion-free hyperbolic group represented by a full-rooted graph of groups. Then for any Dehn twist, the corresponding mapping torus satisfies a quadratic isoperimetric inequality.
\end{proposition}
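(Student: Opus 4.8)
The structural picture is that $M_\varphi$ is again a finite graph of groups. A Dehn twist of a full-rooted graph of groups fixes the underlying graph and, after an equivariant choice of twisting elements, restricts to an \emph{inner} automorphism of each vertex group $G_v$: the twist along an edge incident to $v$ conjugates $G_v$ by the corresponding edge element, which lies in $G_v$ precisely because $\Gamma$ is full-rooted. Hence in $M_\varphi$ the stabiliser of a vertex of the Bass--Serre tree is $G_v\times\mathbb Z$ and the stabiliser of an edge is $\mathbb Z^2$, the two $\mathbb Z$-factors being the maximal cyclic subgroup of $G_v$ through the edge element and the new stable direction; Lemma~\ref{power} is what makes these $\mathbb Z^2$'s glue compatibly, by forcing the two edge elements of each edge to share a root. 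Each vertex group $G_v\times\mathbb Z$ is hyperbolic-by-cyclic, so has a quadratic Dehn function, and each edge group $\mathbb Z^2$ is undistorted. Since no ready combination theorem delivers the quadratic bound for the whole group from this, the plan is to prove it by hand with van Kampen diagrams, using the operations of Section~\ref{oper: Van-Kampen} and the linear growth of a Dehn twist.

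So fix $w$ with $w=_{M_\varphi}1$ and $|w|\le N$. After conjugating (which does not change the mapping torus) take $\varphi$ in standard form and fix a free-group lift $\Phi$; since each basic Dehn twist carries every defining relator of $G$ to a conjugate of a relator, $\Phi$, and hence every power $\Phi^{\ell}$, is area-preserving in the sense of Section~\ref{induced}. Choose a van Kampen diagram $\Delta$ for $w$; by the $t$-ring removal lemma we may assume it has no $t$-ring, so every $t$-corridor runs from $\partial\Delta$ to $\partial\Delta$, there are at most $N/2$ of them, and deleting them cuts $\Delta$ into rooms whose adjacency graph is a tree. Write $m_C$ for the number of $t$-cells in a corridor $C$. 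Each room is a van Kampen diagram over $G$ with a boundary word in $G$, and as $G$ is hyperbolic it has a linear Dehn function, so a room contains at most $C_G$ times its perimeter many primitive cells. The room-perimeters contribute at most $N$ along $\partial\Delta$ and at most $2\sum_C m_C$ along corridors, so the number of primitive cells is at most $C_G(N+2\sum_C m_C)$, and everything reduces to the estimate $\sum_C m_C\le C\,N^2$.

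To obtain that estimate, normalise $\Delta$: using room moving (Lemma~\ref{7}), the path-reductions of Lemma~\ref{path} and the area-preservation of $\Phi^{\pm1}$, repeatedly push a room across a corridor bounding it — a move that keeps the primitive-cell count fixed — and reduce the new corridor sides, until no room is trapped strictly between two corridors. In this configuration the corridors are organised into nested chains, the two sides of each corridor are related by the reduced $\Phi^{\pm1}$-image, and, following a chain outward from its innermost corridor, the successive corridor sides are the iterated images of a bounded piece of boundary data under $\bar\varphi$, in the sense of the lifting lemma for standard-form Dehn twists. Because $\bar\varphi^{\ell}$ merely inserts the fixed words $u_j^{\ell r_j}$ beside the stable letters — and because full-rootedness together with Lemma~\ref{power} prevents unbounded cancellation among those insertions — each corridor side has length at most linear in (the length of the feeding boundary piece) plus (its depth in the chain), hence at most $C\,N$, rather than exponential in its depth. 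With at most $N/2$ corridors this gives $\sum_C m_C\le C\,N^2$, and hence $\mathrm{Area}(w)\le C'N^2$ by the previous paragraph.

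The step I expect to be the main obstacle is precisely this normalisation together with the linear length bound: one must arrange, via the diagram surgeries, that every corridor side genuinely arises by iterating $\bar\varphi$ — not the naive, term-by-term $\Phi$, which already expands by a constant factor at each step — on a bounded amount of boundary data, so that its length grows only linearly, not exponentially, in the corridor's depth. This is where all the hypotheses enter: the full-rooted structure and Lemma~\ref{power} keep the edge elements and their roots well behaved and exclude hidden cancellation, while the lifting lemma encodes the linear growth of a Dehn twist; for a general automorphism this step would only yield an exponential bound. The remaining ingredients — area-preservation of the lift of a Dehn twist, the reduction to a quadratic count of $t$-cells via the linear Dehn function of $G$, and keeping track of which boundary data feeds which chain — are comparatively routine once the normal form is in hand.
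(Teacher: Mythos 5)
There is a genuine gap, and it sits exactly where you predict: the normalization plus the claimed bound that every corridor side has length ``linear in (the length of the feeding boundary piece) plus (its depth in the chain), hence at most $CN$.'' Nothing in your sketch establishes this, and the insertion picture you invoke does not give it: applying $\bar\varphi^{\ell}$ inserts a word of length comparable to $\ell$ at each stable letter (and conjugating elements of length comparable to $\ell$ around each vertex-group block), so the honest estimate for a side fed by boundary data of length $L$ at depth $\ell$ is of order $L\cdot\ell$, not $L+\ell$; possible cancellation is controlled by the lifting lemma only after the word has been rewritten through the basic-loop normal form, and your surgeries (room moving across corridors, ``no room trapped between two corridors,'' ``nested chains'') are never shown to produce a configuration in which corridor sides are literally iterated $\bar\varphi$-images of a bounded amount of boundary data. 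In other words, the estimate $\sum_C m_C\le CN^2$ that your whole argument funnels into is essentially the hard quadratic corridor theorem itself (the content of Macura and of Bridson--Groves in the free-group case), asserted rather than proved; and since your bound on primitive cells is $C_G(N+2\sum_C m_C)$, the primitive-cell count collapses together with it.

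The paper circumvents precisely this obstacle by two moves you do not use. First, it bounds the number of \emph{primitive} cells linearly, not quadratically: pushing all $t$'s to the end converts $z$ into a word $z'$ in $G$ whose relative length with respect to the cyclic subgroups $H_i=\langle v_i\rangle$ generated by the roots of the edge elements is linear in $|z|$ (this is what the lifting lemma is for -- $\varphi^{\ell}$ only inserts powers of the $v_i$, each counting as one relative letter), and $G$ is hyperbolic relative to $\{H_i\}$ by Theorem \ref{convex} (using Lemma \ref{power} and full-rootedness for malnormality and non-conjugacy), so Theorem \ref{relative} gives a linear relative Dehn function and hence linearly many primitive cells. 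Second, after using Lemma \ref{7} to move all primitive cells to the boundary, the union of the $t$-corridors is a connected, simply connected subdiagram with linear boundary length over the mapping torus of the free-group lift $\Phi$, and the quadratic filling of that subdiagram is delegated to Theorem \ref{3} (Bridson--Groves) rather than reproved by corridor-by-corridor length estimates. So your route is genuinely different in strategy, but as written it assumes at its central step a statement at least as deep as the quoted theorem the paper uses; to complete it you would either have to import Theorem \ref{3} at that point (which essentially reproduces the paper's proof) or redo a Macura-type corridor-length analysis, which is a substantial argument, not a remark about full-rootedness and Lemma \ref{power}.
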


\begin{proof}

We label and orient all edges as in the previous section, with edge elements $u_i$, $w_i$, and corresponding roots $v_i$. Since any Dehn twist can be conjugated to a standard Dehn twist, we only need to consider a standard Dehn twist $\varphi=\varphi_n^{r_n}\dots\varphi_1^{r_1}$.

Let $H_i$ be the subgroup generated by $v_i$. Since $G$ is torsion-free, each $H_i$ is a quasiconvex malnormal subgroup of $G$.

If $v_i^n=v^{-1}v^t_jv$, then $v_i^n=(v^{-1}v_jv)^t$. Since $v_i$ and $v_j$ are already roots, according to Lemma \ref{power}, they must be conjugate to each other. In this case, we choose only one of $H_i$ or $H_j$ and consider the family of subgroups $\mathscr{H}=\{H_i\}$. By Theorem \ref{convex}, $G$ is relatively hyperbolic with respect to $\mathscr{H}$.

Now let us consider a standard Dehn twist $\varphi=\varphi_n^{r_n}\dots\varphi_1^{r_1}$. Let $z=X_1t^{l_1}\dots X_nt^{l_k}\in M_{\varphi}$ be a word that equals to $1$ in $M_{\varphi}$, where $X_i$'s are elements in $G$.

The relations $t^{-1}at=\varphi(a)$ implies $t^{-1}a=\varphi(a)t^{-1}$ and $ta=\varphi^{-1}(a)t$, allowing us to move $t$ to the end of the expression and freely cancel them. Thus, we transform $z=X_1t^{l_1}\dots X_nt^{l_n}\in M_{\varphi}$ to $z'=X_1\varphi^{-l_1}(X_2)\dots\varphi^{-(l_1+\dots +l_{n-1})}(X_n)$.

Now, let us analyze each term $\varphi^{-(l_1+\dots +l_{i-1})}(X_i)$ using the lifted map introduced before: $\bar{\varphi}^{-(l_1+\dots +l_{i-1})}(X_i)$

Let $D$ be the diameter of $T$. The length of a basic loop $L(x)$ is bounded by $2D+|x|\leq (2D+1)|x|$. As concatenation, it is evident that the length of the lift of each $X_i$ satisfies $|L(X_i)|\leq (2D+1)|X_i|$.

According to the definition of $\bar{\varphi}$, $z'$ is obtained by inserting some terms in the from $(v_i^{p_i})^{-(l_1+\dots +l_{i-1})r_j}=v_i^{-(l_1+\dots +l_{i-1})r_jp_i}$, and the number of such insertions in no more than $(2D+1)|X_i|$. Therefore, the relative length of the final word with respect to $\mathscr{H}$ is no more than $(2D+2)|X_i|$.

Hence, the relative length of $z'$ with respect to $\mathscr{H}$ is no more than $(2D+2)\sum_{i=1}^n|X_i|\leq (2D+2)|z|$. Considering the relative Dehn function of $G$ with respect to $\mathscr{H}$, which is equal to its Dehn function since each $H_i$ is freely generated, there exists a constant $C$ such that the area of $z'$ is bounded by $C(2D+2)|z|$. 

The entire process $z\rightarrow z'\rightarrow 1$ yields a van Kampen diagram. During the transformation from $z$ to $z'$, we only utilized the relation $t^{-1}at=\varphi(a)$, indicating that the corresponding cells are all $t$-cells. Consequently, the total number of primitive cells in the diagram is bounded by $B|z|$, where $B=2C(D+1)$.

It is evident from the definition that each basic Dehn twist $\varphi_i$ is induced from a free group automorphism $\Phi_i$. Therefore, as a product, $\varphi$ is also induced from a free group automorphism $\Phi$.

Furthermore, each $\Phi_i$ preserves the area since it maps each cell to itself. Hence, $\Phi$ also preserves the area.

\begin{figure}
\includegraphics[width=110mm]{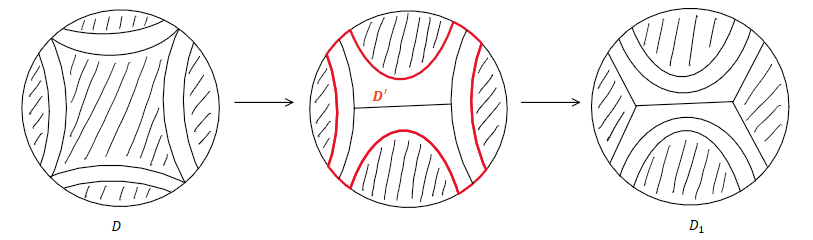}
\caption{}
\label{Transformation}
\end{figure}

Now it is ready to construct a new Van-Kampen diagram for $z$ from the diagram for $z\rightarrow z'\rightarrow 1$.

Initially, by employing the operation from Lemma \ref{7}, we can move all primitive cells towards the boundary so that the set of all $t$-corridors forms a connected, simply-connected subdiagram denoted as $D'$. Every edge on the boundary of $D'$ is either on the boundary of some primitive cell or on the boundary of the original diagram. According to Lemma \ref{7}, the number of primitive cells remains unchanged after these operations.

Suppose the maximal length of the boundary of each primitive cell is $m$. Then, the sum of the length of the boundaries of all primitive cells is bounded by  $Bm|z|$. Consequently, the length of the boundary of $D'$ is bounded by $Bm|z|+|z|=(Bm+1)|z|$.

The next step is refilling $D'$. 
It is noteworthy that $D'$ can be viewed as a van-Kampen diagram of the mapping torus of $\Phi$. According to Theorem \ref{3}, there exists a refilling of $D'$ with $t$-cells, the number of which is bounded by $M(Bm+1)^2|z|^2$ for some constant $M$ depending on $\Phi$. This yields a new diagram denoted as $D_1$. An example of this series of operations is illustrated in Figure \ref{Transformation}, where all primitive cells are shaded.
Since ${\rm{Area}}(z)$ is defined as the minimum area among all such diagrams, the bound for the area of  $D_1$ provides an upper bound for ${\rm{Area}}(z)$:
$${\rm{Area}}(z)\leq {\rm{Area}}(D_1)\leq M(Bm+1)^2|z|^2+B|z|.$$
\end{proof}

\subsection{General case}

We now proceed to prove the case of polynomial-growing automorphisms.

\begin{theorem}\label{polynomial-growing}
Let $G$ be a one-ended torsion-free hyperbolic group, and $\rho$ be a polynomial-growing automorphism of $G$. Then the corresponding mapping torus $M_{\rho}$ satisfies a quadratic isoperimetric inequality.
\end{theorem}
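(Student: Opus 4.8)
The goal is to reduce Theorem~\ref{polynomial-growing} to Proposition~\ref{graph}, which handles Dehn twists on a full-rooted graph of groups. The bridge is the structure theory of polynomially-growing automorphisms of hyperbolic groups via the JSJ decomposition, together with the relative hyperbolicity of the mapping torus supplied by the work of Dahmani--Krishna, and finally Farb's (and Osin's) machinery relating relative Dehn functions to ordinary ones. So I would proceed in four stages.

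First, invoke Levitt's description of $\mathrm{Out}(G)$ for a one-ended torsion-free hyperbolic group $G$: up to finite index, a polynomially-growing outer automorphism is realized, on the canonical (JSJ) splitting of $G$, by a Dehn twist --- i.e.\ some power $\rho^k$ is conjugate to a Dehn twist $\varphi$ along a graph-of-groups decomposition of $G$ with (virtually) cyclic edge groups. Passing to the power $\rho^k$ is harmless: the mapping torus $M_{\rho^k}$ is a finite-index subgroup of $M_\rho$ (it contains $G$ and $t^k$), and quadratic isoperimetric inequalities pass between a group and a finite-index subgroup. Replacing $\rho^k$ by a conjugate automorphism also does not change the mapping torus up to isomorphism, as already noted in Section~\ref{oper: graph of groups}. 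So it suffices to treat $M_\varphi$ for a Dehn twist $\varphi$.

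Second, apply Lemma~\ref{full-rooted}: by a sequence of slidings we may assume the graph of groups carrying $\varphi$ is \emph{full-rooted}, and by Lemma~\ref{D-sliding} some further power of $\varphi$ lies in $D(\Gamma')$ for the slid graph $\Gamma'$ --- again absorbing the power into a finite-index subgroup of the mapping torus. After conjugating to a standard form, Proposition~\ref{graph} applies verbatim and gives the quadratic isoperimetric inequality for $M_\varphi$. Tracing back through the finite-index reductions then yields the quadratic bound for $M_\rho$.

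The main obstacle is the first stage: pinning down precisely, and citing correctly, the form of polynomially-growing automorphisms of a \emph{one-ended} torsion-free hyperbolic group. One must be careful that the JSJ of $G$ has the right kind of edge and rigid-vertex groups, that ``polynomial growth'' forces the automorphism to act trivially (up to finite index and inner automorphisms) on the rigid vertex groups and only twist along the edges --- so that $\rho^k$ genuinely is a Dehn twist in the sense of Section~\ref{Dehn Twist} and not merely a more general automorphism preserving the splitting. This is where Levitt's theorem (generalizing Rips--Sela) does the real work, and the write-up needs to quote it in a form matching the present setup; once that is in hand, the rest is the bookkeeping of finite-index and conjugation reductions described above, all of which preserve the quadratic isoperimetric inequality.
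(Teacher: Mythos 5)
Your proposal follows essentially the same route as the paper's own proof: invoke Levitt's description of polynomially-growing automorphisms as Dehn twists up to power, convert the graph of groups to a full-rooted one via Lemma~\ref{full-rooted} and absorb the resulting power via Lemma~\ref{D-sliding}, and then apply Proposition~\ref{graph}, with the powers handled by the fact that $M_{\rho^k}$ sits as a finite-index subgroup of $M_\rho$. Your write-up is in fact slightly more careful than the paper's on the two reduction steps you flag (the finite-index justification for passing to powers, and the precise form of Levitt's theorem), which the paper asserts with less detail.
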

\begin{proof}
Levitt \cite{levitt2005automorphisms} has provided a classification, up to finite index, of all automorphisms of $G$, based on the JSJ decomposition. In the case of one-ended torsion-free hyperbolic groups, polynomially-growing automorphisms can be represented as Dehn twists up to power.
Let $\Gamma$ be a graph of groups representing $G$, and $D$ be a Dehn twist on $\Gamma_0$ that represents $\rho$, up to power.

If $\Gamma$ is not a full-rooted graph of groups, we can use Lemma \ref{full-rooted} to perform a sequence of slidings to convert it into a full-rooted graph of groups, denoted as $\Gamma'$. Additionally, Lemma \ref{D-sliding} guarantees the existence of an integer $k$ such that $D^k$ is an element of  $D(\Gamma')$, which we denote as $D'\in D(\Gamma)$.

Applying Proposition \ref{graph}, we conclude that $M_{D'}$ satisfies a quadratic isoperimetric inequality. Note that $\rho$ and $D'$ differ by a power; thus, their corresponding mapping tori have the same Dehn function. This completes the proof of the theorem. 
\end{proof}

We are now prepared to prove the main theorem.

\begin{manualtheorem}{\ref{main}}
Let $G$ be a one-ended torsion-free hyperbolic group, and let $\varphi$ be an isomorphism of $G$. Then the corresponding mapping torus $M_{\varphi}$ satisfies a quadratic isoperimetric inequality.
\end{manualtheorem}

\begin{proof}
Based on Theorem \ref{polynomial-growing}, the result follows from two important facts.

The first fact, established by Dahmani and Krishna \cite{dahmani2020relative}, states that the mapping torus of a torsion-free hyperbolic group is relatively hyperbolic with respect to the suspension of maximal polynomially-growing subgroups. These suspension subgroups are isomorphic to mapping tori of polynomial growth automorphisms, and according to Theorem~\ref{polynomial-growing}, they satisfy a quadratic isoperimetric inequality.

The second fact is a result of Farb \cite{farb1998relatively}, who demonstrated that if $G$ is relatively hyperbolic with respect to subgroups $\{H_i\}$, then the Dehn function of $G$ coincides with the maximum Dehn function among $\{H_i\}$.

By combining these facts, we conclude that the mapping torus $M_\varphi$ satisfies a quadratic isoperimetric inequality.
\end{proof}

\end{document}